\def\p{\partial}
\def\H1{{_0}H^1(\Omega)}
\def\N{\mathbb{N}}
\def\R{\mathbb{R}}
\def\Xint#1{\mathchoice
{\XXint\displaystyle\textstyle{#1}}%
{\XXint\textstyle\scriptstyle{#1}}%
{\XXint\scriptstyle\scriptscriptstyle{#1}}%
{\XXint\scriptscriptstyle\scriptscriptstyle{#1}}%
\!\int}
\def\XXint#1#2#3{{\setbox0=\hbox{$#1{#2#3}{\int}$ }
\vcenter{\hbox{$#2#3$ }}\kern-.6\wd0}}
\def\intbar{\Xint-}
\newtheorem{lem}{Lemma}[section]
\newtheorem{prop}[lem]{Proposition}
\newtheorem{thm}[lem]{Theorem}
\newtheorem{remark}[lem]{Remark}
\newtheorem{definition}[lem]{Definition}
\newtheorem*{prop*}{Proposition}
\newtheorem*{thm*}{Theorem}
\newtheorem*{def*}{Definition}
\newtheorem*{lem*}{Lemma}
\newcommand{\eps}{\epsilon}
\newcommand{\ov}{\overline}
\newcommand{\ra}{\rangle}
\newcommand{\la}{\langle}
\newcommand{\wkto}{\rightharpoonup}
\newcommand{\wksto}{\overset{*}{\rightharpoonup}}
\newcommand {\jump}[1] {[\![ #1 ]\!]}
\numberwithin{equation}{section}
\begin{document}
\pagestyle{plain}
\setcounter{page}{1}

\title[Sharp Interface Limit of the Cahn--Hilliard Reaction Model]
{Sharp Interface Limit of the Cahn--Hilliard Reaction Model for Lithium-ion Batteries}

\author{Tim Laux}
\author{Kerrek Stinson}
\address{Hausdorff Center for Mathematics, University of Bonn, Endenicher Allee 62, 53115 Bonn, Germany}
\email{tim.laux@hcm.uni-bonn.de}
\email{kerrek.stinson@hcm.uni-bonn.de}

\begin{abstract} 
We propose a weak solution theory for the sharp interface limit of the Cahn--Hilliard reaction model, a variational PDE for lithium-ion batteries. An essential feature of this model is the use of Butler--Volmer kinetics for lithium-ion insertion, which arises as a Robin-type boundary condition relating the flux of the chemical potential to the reaction rate, itself a nonlinear function of the chemical potential and the ion concentration.
To pass through the nonlinearity as interface width vanishes, we introduce solution concepts at the diffuse and sharp interface level describing dynamics principally in terms of an optimal dissipation inequality. Using this functional framework and under an energy convergence hypothesis, we show that solutions of the Cahn--Hilliard reaction model converge to a Mullins--Sekerka type geometric evolution equation.

\medskip
\noindent \textbf{Keywords:} Cahn--Hilliard, Mullins--Sekerka, lithium-ion battery, gradient flow, dissipation inequality 

\medskip
\noindent \textbf{Mathematical Subject Classification:} 35A01, 35D30, 
49J27, 53E10,
74A65 

\end{abstract}


\maketitle

\section{Introduction}

Lithium-ion batteries play a critical role in the move towards renewable energies. However, batteries are far from reaching their potential and come with many engineering challenges. A fundamental short-coming of current lithium-ion batteries is a limited life-cycle. In the process of charging, lithium-ions intercalate into the host material of the cathode, e.g., iron phosphate FePO$_4$, heterogeneously thereby forming distinct regions of high and low lithium-ion concentration. Large concentration gradients induce a strain in the host material which can ultimately cause damage to the cathode. Unsurprisingly, repeated use will come with a decrease in performance (see \cite{Bazant-Theory2013,Dal2015-Comp}, and references therein). To overcome this challenge, the development and analysis of phenomenologically accurate and physically consistent models for phase separation within batteries are essential.

We consider the Cahn--Hilliard reaction (CHR) model
\begin{align}
\label{pde:CHRnoElastic}
\text{CHR}_\epsilon \text{ model} \ &&\left\{ \begin{aligned}
& \partial_t c = \Delta \mu  &&\text{in } \Omega \times (0,T),\\
&  \mu = -\epsilon\Delta c +\frac{1}{\epsilon}f'(c)  &&\text{in } \Omega \times (0,T),\\
& \partial_n c = 0 && \text{on }\partial \Omega \times (0,T), \\
& \partial_n \mu = R(c,\mu ) &&\text{on }\partial \Omega \times (0,T), \\
& c(0) = c_0 && \text{in } \Omega
\end{aligned}\right.
\end{align}
introduced by Bazant, et al.\ (see \cite{Bazant-Theory2013,burch2009size,cogswell2012coherency,singh2008intercalation,Bazant-PhaseSepDyn2014}) as an electrochemically consistent model for phase separation within a nanoparticle of a lithium-ion battery.
Here $\Omega\subset \R^N$ is a fixed domain representing the nanoparticle, $c$ is the normalized lithium-ion density, the length-scale $0<\epsilon\ll 1$ characterizes the typical width of transition layers between regions of almost-constant lithium-ion density, $f$ is a potential associated with chemical mixing, and $R$ represents the inclusion of Butler--Volmer kinetics for the insertion of lithium-ions into the domain. In contrast to other modeling paradigms, the CHR model can incorporate essential material behavior such as anisotropy \cite{Bazant-Theory2013}, elasticity \cite{cogswell2012coherency}, and damage \cite{kraus2017,O_Connor_2016}.

The purpose of this paper is to develop a solution theory for the sharp interface limit of the CHR$_\epsilon$ model, which qualitatively elucidates how the geometry of the interface between regions of low and high lithium-ion density drives the process of phase separation and coarsening. Using formal asymptotic expansions, this question has been considered for the related viscous CHR model \cite{meca_munch_wagner_2018}, and our result will rigorously validate this by identifying the limit as a Mullins--Sekerka type equation with a boundary reaction term accounting for the insertion of lithium-ions.

Within materials science, in both static (see, e.g., \cite{ambrosio-tortorelli-1990,ContiFonsecaLeoni-gammConv2grad,FJMrigid,ModicaMortola,stinsonLiBatteryGamma}) and evolutionary settings (e.g., \cite{Abels2015SharpIL,chen1996,sandierSerfaty-gammaGrad,Serfaty2011}), insight for energetic models can often be derived via their asymptotic limit. 
The classical result of Modica \cite{Modica87} (see also \cite{ModicaMortola}) shows that for a domain $\Omega\subset \R^N,$ the Cahn--Hilliard free energy 
\begin{equation}\label{def:energy}
I_\epsilon[c] : = \int_\Omega \Big( \frac{1}{\epsilon} f(c) + \frac{\epsilon}{2}\|\nabla c\|^2 \Big)\, dx
\end{equation}
converges to a scaled perimeter/area functional 
\begin{equation}\nonumber
I_0[c_0]  = \sigma {\rm Area}(\partial \{c_0=1\} \cap \Omega) , \quad \text{where }c_0 \colon \Omega \to \{0,1\},
\end{equation}
with $f$ a two-well potential, e.g., $f(s) = s^2(s-1)^2$, and $\sigma > 0$ the surface tension energy density (see (\ref{def:energyPer})).
This means that the a priori complicated phase transition effectively behaves like a minimal surface when near an equilibrium. 
Looking to the chemical potential (later called $\mu$) associated with the first variation of $I_\epsilon$, Luckhaus and Modica \cite{LuckhausModica} verified a conjecture of Gurtin's regarding the Gibbs--Thomson relation, thereby showing that the curvature of the underlying minimal surface closely approximates the chemical potential of energy minimizers. Though the relation was explicitly derived for minimizers of $I_\epsilon$, these tools are often helpful in the study of curvature driven evolution equations (see Section \ref{sec:sharpInterface} and also, e.g., \cite{chen1996,kroemerLaux2021}).

Solutions of the CHR$_\epsilon$ model are realized in a variational framework as solutions of a doubly nonlinear differential inclusion given by
\begin{equation}\label{eqn:doublyNonlinearDiffInclusion}
0 \in \partial I_\epsilon[c] - \partial \mathcal{A}^*_{c}(-\partial_t c),
\end{equation}
where $\mathcal{A}^*_{c}$ operates as a metric for the gradient flow that encodes the Robin-type boundary condition $\partial_n \mu = R(c,\mu)$ (see Section \ref{sec:funcFramework}). In the case of the typical Cahn--Hilliard equation, $\mathcal{A}^*(z) = \|z\|_{H^{-1}_{(0)}}^2,$ where $H^{-1}_{(0)}$ is the dual of $H^1_{(0)}:=H^1(\Omega)\cap \{u:\int_\Omega u =0\}$ the Sobolev space with $0$-mass average. Here (\ref{eqn:doublyNonlinearDiffInclusion}) reduces to the standard gradient flow form 
\begin{equation}\label{eqn:diffInclusionCH}
-\partial_t c \in  \partial I_\epsilon[c],
\end{equation}
 where the subdifferential has values in $H^{-1}_{(0)}.$

At the fixed $\epsilon$-level, gradient flows in Banach spaces such as (\ref{eqn:diffInclusionCH}) are well understood, and much work in the past 30 years has been dedicated to expanding the power and generalizing gradient flow perspectives to weaker settings such as for metric spaces, optimal transport, or in relation to $\Gamma$-convergence (see, e.g., \cite{AGS-GradFlows,sandierSerfaty-gammaGrad,Serfaty2011}). 
For doubly nonlinear differential inclusions like (\ref{eqn:doublyNonlinearDiffInclusion}), we refer to Mielke, et al. \cite{mielke-doublyNonlinearEvo}, and with specific application to the CHR$_\epsilon$ model, we refer the reader to \cite{kraus2017}, \cite{stinson-phd}, and Section \ref{sec:CHRexist}, where we prove existence of weak solutions.

Though the differential inclusion (\ref{eqn:diffInclusionCH}) at the $\epsilon$-level is expressed with a linear metric structure given by $H^{-1}_{(0)},$ the limiting equation, commonly known as the Mullins--Sekerka equation (or flow), is a curvature driven geometric evolution equation over a formal Hilbert-manifold. In the seminal work of Alikakos, et al.\ \cite{AlikakosBatesChen}, they proved convergence as $\epsilon\to 0$ of solutions of the Cahn--Hilliard equation to the Mullins--Sekerka flow assuming that a smooth solution of the limiting equation exists. However to understand the equation in the weak setting, for instance after topological change, an alternative approach is needed. 

To prove convergence of solutions of the Cahn--Hilliard equation to a weak solution of the Mullins--Sekerka flow, Chen introduced estimates on (the positive part of) the discrepancy measure, which captures the lack of an equiparition of energy \cite{chen1996}. A principle challenge is that, in contrast to the second-order Allen--Cahn equation \cite{ilmanen}, the maximum principle is no longer valid. Though this approach has proven widely applicable (e.g., \cite{abelsLengeler2014,AbelsRoeger,melchionnaRocca2017}), it requires the introduction of an additional varifold (``weighted interface") to account for the Gibbs--Thomson relation that is only valid in the domain interior, and consequently cannot account for the contact angle of the interface with the domain boundary. Further, as the evolution is given by a gradient flow, it would be natural to express the dynamics in terms of an optimal dissipation inequality.

A powerful alternative to expressing the nonlinear structure through an explicit differential inclusion as in (\ref{eqn:diffInclusionCH}) is the approach of De Giorgi (see \cite{AGS-GradFlows} and references therein; cf. Sandier and Serfaty \cite{sandierSerfaty-gammaGrad}), which was already partly envisioned by Onsager \cite{onsager31}. Precisely, one interprets solutions as curves of maximal slope, that is, curves that satisfy an optimal energy dissipation relation. This concept was introduced by De Giorgi, et al.\ for metric spaces \cite{DeGiorgi1980}, but has also recently found its footing for mean-curvature flows \cite{Hensel2021l}, where it is well known that the metric (geodesic) structure is degenerate. Beyond unconditional existence, in \cite{Hensel2021l}, the optimal dissipation solution concept is powerful enough to provide a weak-strong uniqueness principle for mean curvature flow. With application to the Mullins--Sekerka equation, a related solution concept was developed in the smooth setting by Le \cite{Le2008} and recently extended to the weak setting by Hensel and the second author in \cite{henselStinson-weakSolnMS}. 

To capture the optimal dissipation for the CHR$_\epsilon$ model as $\epsilon\to 0$, there will be two nonlinearities to account for in (\ref{eqn:doublyNonlinearDiffInclusion}): the first is the Gibbs--Thomson relation expressing the curvature of the evolving surface by the trace of the chemical potential, just as in the case of the classical Cahn--Hilliard equation; the second is the nonlinear metric structure arising from the Butler--Volmer kinetics (in the spirit of Mielke, et al.\ \cite{mielke-doublyNonlinearEvo}). To account for the coupling of these nonlinearities, we will not express the velocity explicitly by an $H^1(\Omega)$ potential (i.e., $\partial_t c  = \Delta \mu$) satisfying the Gibbs--Thomson relation, as is typically done for Mullins--Sekerka (see, e.g., \cite{chen1996,LucStu,Roeger2005}), but instead couple the velocity with a potential representing the Gibbs--Thomson relation only through the optimal dissipation relation, consistent with \cite{Hensel2021l} and \cite{henselStinson-weakSolnMS}.

The principal result of the paper is given in Theorem \ref{thm:BVsolnExist}, which we summarize here.
\begin{thm*}
Any sequence of (weak) solutions $c_\eps$ of the CHR$_\epsilon$ model (\ref{pde:CHRnoElastic}) with well-prepared initial conditions has a subsequence converging to some limit $c \colon \Omega\times [0,T) \to \{0,1\}$ as $\eps\to0$.
Under an energy convergence hypothesis, the limit $c$ is a weak solution of the Mullins--Sekerka reaction model, a geometric evolution equation (see Definition \ref{def:strongSolnMSR}).
\end{thm*}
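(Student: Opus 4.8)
The plan is to split the argument into an \emph{unconditional} compactness step, which produces the limit $c\colon\Omega\times[0,T)\to\{0,1\}$, and a \emph{conditional} identification step, which upgrades this limit to a weak solution of the Mullins--Sekerka reaction model once the energy convergence hypothesis is assumed.

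First I would extract a priori estimates directly from the diffuse-interface optimal dissipation inequality satisfied by $c_\eps$. Well-preparedness of the initial data together with the dissipation inequality yields a uniform energy bound $\sup_t I_\eps[c_\eps(t)]\le C$ and a uniform bound on the total dissipation over $(0,T)$. The energy bound controls $\tfrac1\eps\int_\Omega f(c_\eps)$, which via the Modica--Mortola trick forces $f(c_\eps)\to0$ in $L^1$, so any limit takes values in the wells $\{0,1\}$; simultaneously it bounds the perimeter of the diffuse interface through $\int_\Omega|\nabla W(c_\eps)|\le I_\eps[c_\eps]$, where $W'=\sqrt{2f}$, giving $BV$ compactness in space. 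The dissipation bound, which encodes $\partial_t c_\eps=\Delta\mu_\eps$ together with the Robin condition $\partial_n\mu_\eps=R(c_\eps,\mu_\eps)$, provides equicontinuity in time in a weak (negative Sobolev) norm. Combining these through an Aubin--Lions/Arzel\`a--Ascoli argument yields a subsequence with $c_\eps\to c$ in $L^1(\Omega\times(0,T))$ and $c(t)\in BV(\Omega;\{0,1\})$ for a.e.\ $t$, proving the first assertion.

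For the identification, I would invoke the energy convergence hypothesis, $I_\eps[c_\eps(t)]\to I_0[c(t)]$, to pass from lower semicontinuity to genuine convergence of the interfacial measures. Using the arguments of Modica and Luckhaus--Modica (Reshetnyak continuity for the associated varifolds), the energy convergence rules out hidden loss of surface area and lets me recover the sharp-interface normal measure together with the Gibbs--Thomson relation, i.e.\ that the trace of the limiting chemical potential $\mu$ equals $\sigma$ times the mean curvature of $\partial^*\{c=1\}$. The remaining task is to pass to the limit in the optimal dissipation inequality itself. Here the crucial advantage of the dissipation formulation is that I need not identify $\partial_t c_\eps=\Delta\mu_\eps$ pointwise; instead I exploit lower semicontinuity of the convex dissipation functional $\mathcal{A}^*$ against the matching upper bound coming from the converging energies, so that the diffuse inequality degrades cleanly to the sharp-interface inequality of Definition \ref{def:strongSolnMSR}.

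I expect the main obstacle to be passing through the Butler--Volmer nonlinearity $R(c,\mu)$ on $\partial\Omega$ \emph{while simultaneously} enforcing the Gibbs--Thomson relation. The reaction term couples the boundary traces of $c$ and $\mu$ nonlinearly, yet $\mu_\eps$ is controlled only in a weak norm and the interface may meet $\partial\Omega$ at a nontrivial contact angle --- precisely the situation the discrepancy-measure approach cannot handle. The resolution I would pursue is to keep the velocity coupled to the Gibbs--Thomson potential only through the dissipation relation (as in \cite{Hensel2021l,henselStinson-weakSolnMS}), using the convexity of $\mathcal{A}^*$ and the monotonicity of $R$ so that the nonlinear boundary contribution is lower semicontinuous under the available weak convergence; the energy convergence hypothesis is then exactly what guarantees that no dissipation is lost in this passage to the limit.
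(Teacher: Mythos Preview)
Your outline matches the paper's strategy closely: compactness via the Modica--Mortola bound and a dissipation-driven time estimate followed by an Aubin--Lions/Rossi--Savar\'e argument; then, under energy convergence, the Luckhaus--Modica stress-tensor identity plus Reshetnyak-type convergence for the Gibbs--Thomson relation; and finally lower semicontinuity of $\mathcal{A}_c$ and $\mathcal{A}^*_c$ to pass the dissipation inequality to the limit.

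There is, however, one point where your proposed mechanism is not quite sufficient. You write that the nonlinear boundary contribution is lower semicontinuous ``under the available weak convergence'' thanks to convexity of $\mathcal{A}^*$ and monotonicity of $R$. But $-G(c,\mu)$ is convex only in $\mu$; the dependence on $c$ is general, so joint weak convergence of $(c_\eps,\mu_\eps)$ on $\partial\Omega$ does \emph{not} give lower semicontinuity of $\int_{\partial\Omega}-G(c_\eps,\mu_\eps)$, and the same obstruction appears when showing $\mathcal{A}_{c_\eps}(v)\to\mathcal{A}_c(v)$ for fixed test functions $v$ in the $\mathcal{A}^*$ argument. What is actually needed is \emph{strong} convergence of the boundary traces of $c_\eps$ in $L^{(2^\#-\delta)'}(\partial\Omega)$. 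The paper obtains this from the energy convergence hypothesis: energy convergence upgrades the $BV$ convergence of $\phi\circ c_\eps$ to \emph{strict} convergence, and the trace operator $BV(\Omega)\to L^1(\partial\Omega)$ is continuous for the strict topology, which (together with the growth of $\phi$) yields the required strong trace convergence of $c_\eps$. Once this is in hand, $c_\eps$ generates a Dirac Young measure on $\partial\Omega$ and one can pass through the $c$-slot of $G$ by continuity, reducing the problem to lower semicontinuity in $\mu$ alone, where your convexity argument does apply. So the energy convergence hypothesis is doing more work than you indicate: it is not only preventing loss of interfacial area in the interior, but is also the mechanism that produces the strong boundary-trace compactness needed to decouple the $(c,\mu)$ nonlinearity in the reaction term.
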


The paper is outlined as follows. In Section \ref{sec:funcFramework}, we develop a functional framework that is applicable to the variational structure of the CHR$_\epsilon$ model and the limiting sharp interface model.
Though not the principle motivation of the paper, it is of interest that in Section \ref{sec:CHRexist} we prove existence of weak solutions for the CHR$_\epsilon$ with $\epsilon>0$ fixed. To do this, we introduce a solution concept based on the optimal dissipation, which under mild-regularity assumptions is consistent with more traditional distributional formulations as used in \cite{kraus2017} and \cite{stinson-phd}. Finally, in Section \ref{sec:sharpInterface}, we introduce strong and weak formulations of the Mullins--Sekerka reaction model. 
Therein, we prove that smooth solutions of the weak formulation are in fact strong solutions (Theorem \ref{thm:consistency}). Further, we obtain precompactness of the solution space as $\epsilon \to 0$ and show that solutions of the CHR$_\epsilon$ model converge to weak solutions of the Mullins--Sekerka reaction model in Theorem~\ref{thm:BVsolnExist}.

The primary contribution of this paper to the existing literature on models for lithium-ion batteries is the rigorous derivation of the effective behavior of phase-field geometry on the dynamics of coarsening, confirming the sharp interface limit identified for a related viscous CHR$_\epsilon$ model using formal asymptotic expansions in \cite{meca_munch_wagner_2018}. The primary mathematical contribution is the refinement of a functional framework and introduction of solution concepts that account for the nonlinear Robin-type boundary condition $\partial_n \mu = R(c,\mu)$ and capture the dynamics of the diffuse and sharp interface models via an optimal dissipation inequality.

\section{Assumptions}

We will make use of a collection of assumptions to prove existence of weak solutions to the CHR$_\epsilon$ model. In application \cite{singh2008intercalation}, the potential $f$ is given by a logarithmic two-well function and the reaction $R$ is an exponential type function. As is commonly done, we will make the non-physical assumption that $f$ and $R$ have polynomial growth. We also include some structural assumptions on $f$ and $R$, but these are satisfied in application as well (cf. \cite{singh2008intercalation}). We remark that the truncation type arguments of \cite{elliott-CHdegenMobility} might possibly be used to remove the polynomial-growth assumption on $f$; however, it is unlikely such an approach would carry over for $R$ (see also \cite{stinsonLiBatteryExpBC} and Lemma \ref{lem:H1dualbdd}).

  For convenience, one can always think of $2^\#-\delta$, introduced below, as equal to $2$.

\begin{enumerate}[label=(\roman*)]
\item For some $0<\delta \leq 2^\#-2$, we assume that the chemical energy density is governed by a function $f\in C^2(\R)$ such that, for some $C>0$,
\begin{equation} \label{ass:f}
 f(s) \geq \frac{1}{C}|s|^{\max\{1,2[(2^\# - \delta)'-1]\}}-C \quad  \text{and} \quad  |f''(s)|\leq C(|s|^{2^*/2-1}+1)
 \end{equation}
for all $s\in \R$. Here $2^*$ is the dimension dependent Sobolev exponent: $2^* = \frac{2N}{N-2}$ if $N\geq 3$ and is any fixed constant greater than $2$ if $N\leq 2$. Likewise $2^\#$ is the optimal parameter for the trace embedding given by $2^\#:=\frac{2(N-1)}{N-2}$ if $N\geq3$ and any fixed constant greater than $2$ if $N\leq 2$. We also use the notation $p'$ to refer to the conjugate exponent of $p,$ which satisfies $\frac{1}{p} + \frac{1}{p'} = 1.$
Furthermore, we assume that $f$ is a double-well potential, and up to translation and scaling, we may assume
\begin{equation}\label{ass:f1}
 f(0) = f(1) = 0 \quad \text{and} \quad f(s)>0 \text{ for all }s\not\in \{0,1\}.
\end{equation}

\item
For the reaction rate function $R,$ we assume that there is $G\in C^1(\R^2)$ such that 
\begin{equation}\label{ass:G0}
\partial_w G(s,w) = R(s,w)
\end{equation} for all $s\in \R$ and $w\in \R.$ We suppose that the reaction rate is strictly decreasing in the second variable, in the sense that there is $C>0$ such that
\begin{equation}\label{ass:R1}
\left(R(s,w_2) - R(s,w_1) \right)(w_2-w_1) \leq -\frac{1}{C} |w_2-w_1|^2
\end{equation}
for all $s\in \R$ and $w_1,w_2 \in \R$. Further, for some $C>0$ and $\delta>0$ introduced above, the growth condition
\begin{equation}\label{ass:R2} |R(s,w)| \leq C(|s| + |w|^{2^\# - \delta-1} + 1)
\end{equation}
holds for all $s\in \R$ and $w \in \R$. We assume there is a constant $C>0$ such that the pointwise bound 
\begin{equation}\label{ass:R3}
|R(s,\pm 1)|\leq C
\end{equation}
is satisfied for any choice of $s \in \R.$ 

\end{enumerate}

Testing (\ref{ass:R1}) with $w_2=1$ or $-1$ and $w_1=0$, and using (\ref{ass:R3}) ,we find that $|R(s,0)|\leq C$ for some constant $C>0.$ 
From this bound and (\ref{ass:R1}) it follows there is $C>0$ such that, for all $s\in \R$ and $w\in \R$,
\begin{equation}\label{ass:R4}
-w R(s,w)\geq \frac{1}{C}|w|^2  -C.
\end{equation}
To derive growth conditions of the function $G,$ we may, without loss of generality, replace $G(s,w)$ by $G(s,w) -G(s,0)$ thereby enforcing $\partial_s G(s,0) = 0.$ Consequently, the fundamental theorem of calculus, (\ref{ass:R2}), and Young's inequality imply
\begin{equation}\label{ass:G1}
 |G(s,w)| \leq C(|s||w| + |w|^{2^\# - \delta} + 1) \leq C(|s|^{(2^\# - \delta)'} + |w|^{2^\# - \delta} + 1) 
\end{equation}
for all $s\in \R$ and $w\in \R.$

\begin{remark}
Existence of $G \in C^1(\R^2)$ satisfying  (\ref{ass:G0}) implies that we may without loss of generality assume
\begin{equation}\nonumber
G(s,w):=\int_{0}^w R(s,\rho) \, d\rho.
\end{equation}
\end{remark}
With this remark in hand, we can derive coercivity conditions for $-G$ as well:
\begin{equation}\label{ass:G2}
G(s,w) = \int_{0}^w R(s,\rho) \, d\rho\leq \int_0^w \left(-\frac{1}{C} |\rho| + R(s,0) \right) d \rho \leq -\frac{1}{C}|w|^2 + C|w| \leq -\frac{1}{C}|w|^2+C,
\end{equation}
where the computation was done for $w>0$, but the final inequality holds for all $w\in \R.$

\begin{remark}
We note that the coercivity hypothesis arising in (\ref{ass:f}) is needed to improve trace estimates for weak solutions of the CHR$_\epsilon$ model (\ref{pde:CHRnoElastic}) uniform in $\epsilon>0$. This estimate will interact with (\ref{ass:G1}), which appears in the metric of the gradient flow structure, to give an appropriate compactness on the time derivative of solutions of (\ref{pde:CHRnoElastic}) for the sharp interface limit.

In this vein, changing the coercivity (\ref{ass:f}) to have growth $\frac{2p}{2^{\#}-\delta-1}$ one can replace  $|s|$ by $|s|^p$ in (\ref{ass:R2}).
\end{remark}

\section{Functional Framework} \label{sec:funcFramework}

We introduce a framework, which naturally extends the functional framework based on the inverse Laplacian $(-\Delta)^{-1}:H^{-1}_{(0)} \to H^1_{(0)}$ used to express the Cahn--Hilliard equation to a setting appropriate for the CHR$_\epsilon$ model. We build off of a novel functional framework introduced by Kraus and Roggensack \cite{kraus2017} to find solutions of a viscous CHR model. In their exposition, duality revolves around $L^2(\Omega),$ which is the natural space for an Allen--Cahn type equation, but in the case of the CHR$_\epsilon$ model, the gradient flow structure is succinctly captured when the primary functions spaces are $H^1(\Omega)$ and its dual $H^1(\Omega)^*$. This approach and a coercivity estimate (Lemma \ref{lem:H1dualbdd} below) will allow us to prove existence of weak solutions to the CHR$_\epsilon$ model (\ref{pde:CHRnoElastic}) on domains with limited regularity assumptions and no viscosity (cf. \cite{kraus2017}) and will further capture the dissipation dynamics for the sharp interface model.

We introduce functionals to define the gradient structure. Let 
\begin{equation}\label{def:functA}
\begin{aligned}
\mathcal{A} & :L^{(2^\#-\delta)'}(\partial \Omega)\times H^1(\Omega) \to \R\cup\{\infty\}, \\
\mathcal{A}(c,v) & := \frac{1}{2}\int_\Omega \|\nabla v\|^{2} \, dx - \int_{\partial \Omega} G(c,v) \ d\mathcal{H}^{N-1}.
\end{aligned}
\end{equation}
This functional is proper, lower semi-continuous, and convex in the second input by assumptions (\ref{ass:G0}), (\ref{ass:R1}), and (\ref{ass:G1}). Furthermore, define 
\begin{equation}\nonumber
\begin{aligned}
\mathcal{B} & :L^{(2^\#-\delta)'}(\partial \Omega)\times H^1(\Omega) \to H^1(\Omega)^*, \\
\la \mathcal{B}(c,v), \xi \ra_{H^{1}(\Omega)^*,H^1(\Omega)}& :=  \int_\Omega \nabla v \cdot  \nabla \xi \ dx - \int_{\partial \Omega} R(c,v) \xi \ d \mathcal{H}^{N-1}.
\end{aligned}
\end{equation}

Under the monotonicity assumption (\ref{ass:R1}) and growth condition (\ref{ass:R2}) on $R$, we can show for fixed $c\in L^{(2^{\#}-\delta)'}(\partial \Omega)$ that $\mathcal{B}_c(\cdot) :=\mathcal{B}(c,\cdot)$ is strictly monotone, bounded, and coercive. Consequently, we may define (see Lemma 1 in \cite{kraus2017}) the bounded and continuous inverse operator
\begin{equation}\label{def:functBinv}
\begin{aligned}
\ov{\mathcal{B}} & :L^{(2^\#-\delta)'}(\partial \Omega)\times H^1(\Omega)^* \to H^1(\Omega), \\
\ov{\mathcal{B}}(c,v^*) & :=  \mathcal{B}^{-1}_c(v^*) .
\end{aligned}
\end{equation}
Setting $\mathcal{A}_c(\cdot):=\mathcal{A}(c,\cdot)$, using (\ref{ass:G0}) and (\ref{ass:R2}), given that the directional derivative in $H^1(\Omega)$ of $\mathcal{A}_c$ at $v\in H^1(\Omega)$ may be explicitly computed, we have
\begin{equation}\label{eqn:subdiffToBrelation}
v^*\in \partial \mathcal{A}_c(v)\subset H^1(\Omega)^* \iff v^*= \mathcal{B}(c,v).
\end{equation}
Hence, we can encapsulate equations $\partial_t c = \Delta \mu$ and $\partial_n \mu = R(c,\mu)$ of (\ref{pde:CHRnoElastic}) in the inclusion 
\begin{equation} \label{eqn:gradFlowStruct}
 -\partial_t c \in \partial \mathcal{A}_c(\mu),
\end{equation} thereby providing a gradient flow structure (if one is willing to temporarily forgo the definition of $\mu$). As this is a differential inclusion, there is a natural dual formulation. Let $\mathcal{A}^*$ be the Legendre--Fenchel transform of $\mathcal{A}$ with respect to the second input in the dual of $H^1(\Omega)$, that is,
\begin{equation}\label{def:functAconj}
\begin{aligned}
\mathcal{A}^* & :L^{(2^\#-\delta)'}(\partial \Omega)\times H^1(\Omega)^* \to \R\cup\{\infty\} ,\\
\mathcal{A}^*_c(v^*) & := (\mathcal{A}_c)^*(v^*) = \sup_{v\in H^1(\Omega)}\left\{\langle v^*,v \rangle_{H^1(\Omega)^*,H^1(\Omega)}-\mathcal{A}_c(v)\right\} .
\end{aligned}
\end{equation}
For $v \in H^1(\Omega)$ and $v^* \in H^1(\Omega)^*$, we will be interested in the Legendre--Fenchel inequality \cite{rockafellar-convex} $$\la v^*,v \ra_{H^1(\Omega)^*,H^1(\Omega)} \leq \mathcal{A}_c(v) + \mathcal{A}^*_c(v^*),$$ where particularly in the case of equality one has
\begin{equation}\label{eqn:legendreFenchelIneq}
\la v^*,v \ra_{H^1(\Omega)^*,H^1(\Omega)} = \mathcal{A}_c(v) + \mathcal{A}^*_c(v^*) \iff v^* \in \partial \mathcal{A}_c(v) \iff v \in \partial \mathcal{A}^*_c(v^*) .
\end{equation}
Making use of (\ref{def:functBinv}), (\ref{eqn:subdiffToBrelation}), and (\ref{eqn:legendreFenchelIneq}), we have
\begin{align}
\partial \mathcal{A}^*_c (v^*) = \{\ov{\mathcal{B}}(c,v^*)\}\subset H^1(\Omega), \label{eqn:subdiffH1}
\end{align} which, looking back to (\ref{eqn:gradFlowStruct}), provides a way to express $\mu$ in terms of $c$ and $\partial_t c$ via convex duality. Explicitly,
\begin{equation}\label{def:muViaB}
\mu = \ov{\mathcal{B}}(c,-\partial_t c).
\end{equation}
To tie this to the CHR$_\epsilon$ model (\ref{pde:CHRnoElastic}), a solution given in terms of the pair $(c,\mu)$ will satisfy the formal relation $\partial I_\epsilon [c] = \mu = \partial \mathcal{A}^*_c(-\partial_t c)$ for all time, and (\ref{def:muViaB}) shows that the second equality is in fact a definition for $\mu$. Consequently, to construct a solution of the CHR$_\epsilon$ model, one is first motivated to define $c$ such that $\partial I_\epsilon [c] =  \partial \mathcal{A}^*_c(-\partial_t c)$, thereby recovering (\ref{eqn:doublyNonlinearDiffInclusion}). This will be carried out in Theorem \ref{thm:CHRSolnexist}.

Finally, we conclude this section with an $H^1$-dual bound essential to our compactness arguments.
\begin{lem} \label{lem:H1dualbdd}
Let $\Omega \subset \R^N$ be a bounded, open set with Lipschitz boundary. Assume hypotheses (\ref{ass:f}) to (\ref{ass:R3}) hold with $\mathcal{A}^*$ defined as in (\ref{def:functAconj}). Suppose that $\|c\|_{L^{(2^{\#}-\delta)'}(\partial \Omega)}\leq \alpha.$ Then there exists $C_\alpha>0$ such that 
\begin{equation}\nonumber
\mathcal{A}^*_c(v^*)\geq \frac{1}{C_\alpha}\|v^*\|_{H^{1}(\Omega)^*}^{(2^\#-\delta)'} -C_\alpha. 
\end{equation}
\end{lem}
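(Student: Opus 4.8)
The plan is to deduce the coercivity of the conjugate $\mathcal{A}^*_c$ from a matching \emph{upper} bound on $\mathcal{A}_c$ together with the order-reversing property of the Legendre--Fenchel transform. Indeed, since $\mathcal{A}^*_c(v^*)=\sup_{v}\{\langle v^*,v\rangle-\mathcal{A}_c(v)\}$, any pointwise bound $\mathcal{A}_c(v)\le h(v)$ yields at once $\mathcal{A}^*_c(v^*)\ge h^*(v^*)$; the exponent $(2^\#-\delta)'$ in the claim is then precisely what appears upon conjugating a power $2^\#-\delta$ of the $H^1(\Omega)$-norm. Note that only the growth of $G$, namely (\ref{ass:G1}) (itself derived from (\ref{ass:R2})), together with the bound on $\|c\|_{L^{(2^\#-\delta)'}(\partial\Omega)}$, is actually needed.

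First I would show that, whenever $\|c\|_{L^{(2^\#-\delta)'}(\partial\Omega)}\le\alpha$,
\[
\mathcal{A}_c(v)\le C_\alpha\,\|v\|_{H^1(\Omega)}^{2^\#-\delta}+C_\alpha\qquad\text{for all }v\in H^1(\Omega).
\]
The Dirichlet part is immediate from $\tfrac12\|\nabla v\|_{L^2}^2\le\tfrac12\|v\|_{H^1(\Omega)}^2$. For the boundary part I would insert the growth estimate $|G(c,v)|\le C(|c|^{(2^\#-\delta)'}+|v|^{2^\#-\delta}+1)$ from (\ref{ass:G1}) and integrate over $\partial\Omega$: the $|c|$-term is bounded by $\alpha^{(2^\#-\delta)'}$ by hypothesis, the constant term by $C\,\mathcal{H}^{N-1}(\partial\Omega)$, and the $|v|$-term by the trace embedding $H^1(\Omega)\hookrightarrow L^{2^\#}(\partial\Omega)$ (valid on Lipschitz domains) followed by $L^{2^\#}(\partial\Omega)\hookrightarrow L^{2^\#-\delta}(\partial\Omega)$, since $2^\#-\delta\le 2^\#$ and $\partial\Omega$ has finite measure; this gives $\|v\|_{L^{2^\#-\delta}(\partial\Omega)}^{2^\#-\delta}\le C\|v\|_{H^1(\Omega)}^{2^\#-\delta}$. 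Finally, because $\delta\le 2^\#-2$ forces $2^\#-\delta\ge 2$, the quadratic Dirichlet term is dominated by $\|v\|_{H^1(\Omega)}^{2^\#-\delta}$ once $\|v\|_{H^1(\Omega)}\ge1$ and is otherwise absorbed into the additive constant, which yields the display with $C_\alpha$ depending only on $\alpha$, the constant in (\ref{ass:G1}), the trace constant, and $\mathcal{H}^{N-1}(\partial\Omega)$.

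With $h(v):=C_\alpha\|v\|_{H^1(\Omega)}^{2^\#-\delta}+C_\alpha$ fixed, order-reversal gives $\mathcal{A}^*_c(v^*)\ge h^*(v^*)$, so it remains to bound $h^*$ from below. By definition of the dual norm, for each $\eta>0$ I would pick $\phi\in H^1(\Omega)$ with $\|\phi\|_{H^1(\Omega)}=1$ and $\langle v^*,\phi\rangle\ge\|v^*\|_{H^1(\Omega)^*}-\eta$, and test the supremum defining $h^*$ with $v=t\phi$, $t>0$, to obtain
\[
h^*(v^*)\ge t\big(\|v^*\|_{H^1(\Omega)^*}-\eta\big)-C_\alpha\,t^{2^\#-\delta}-C_\alpha.
\]
Maximizing the elementary one-variable map $t\mapsto ts-C_\alpha t^{2^\#-\delta}$ over $t\ge0$, with $s=\|v^*\|_{H^1(\Omega)^*}$ (its maximum scales like $s^{(2^\#-\delta)'}$ because $2^\#-\delta>1$), and then letting $\eta\to0$ produces, after relabeling constants, exactly $\mathcal{A}^*_c(v^*)\ge\frac{1}{C_\alpha}\|v^*\|_{H^1(\Omega)^*}^{(2^\#-\delta)'}-C_\alpha$.

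The argument is essentially bookkeeping in convex duality, and I do not expect a genuine obstacle. The two points needing care are, first, tracking that every constant depends on $c$ only through the bound $\alpha$, so that $C_\alpha$ is uniform over the admissible class $\{\,\|c\|_{L^{(2^\#-\delta)'}(\partial\Omega)}\le\alpha\,\}$ (this uniformity is exactly what the later compactness arguments require); and second, the use of the subcritical trace embedding at exponent $2^\#-\delta$, where the Lipschitz regularity of $\partial\Omega$ and the inequality $2^\#-\delta\le 2^\#$ enter. The step most likely to hide an exponent slip is the comparison of $\|\nabla v\|_{L^2}^2$ with $\|v\|_{H^1(\Omega)}^{2^\#-\delta}$ across the small- and large-norm regimes, which relies on $2^\#-\delta\ge2$.
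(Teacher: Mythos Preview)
Your proposal is correct and follows essentially the same route as the paper: both first establish the upper bound $\mathcal{A}_c(v)\le C_\alpha(\|v\|_{H^1(\Omega)}^{2^\#-\delta}+1)$ via (\ref{ass:G1}) and the trace embedding, then pass to the conjugate. The only cosmetic difference is that the paper computes the exact maximizer of $v\mapsto \langle v^*,v\rangle - C_\alpha\|v\|_{H^1(\Omega)}^{2^\#-\delta}$ via its Gateaux derivative (exploiting the Hilbert structure of $H^1(\Omega)$), whereas you test with $v=t\phi$ for a near-optimal unit vector $\phi$ and optimize over the scalar $t$; both yield the same $\|v^*\|_{H^1(\Omega)^*}^{(2^\#-\delta)'}$ scaling.
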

\begin{proof}
Under the growth assumption (\ref{ass:G1}) on $G$ and the definition of $\mathcal{A}$ in (\ref{def:functA}), for $v\in H^1(\Omega),$ we have 
\begin{align*}
\mathcal{A}_c(v)\leq & C\big(\|v\|_{H^1(\Omega)}^2 +\|v\|_{L^{2^\#-\delta}(\partial \Omega)}^{2^\#-\delta}+\|c\|_{L^{(2^{\#}-\delta)'}(\partial \Omega)}^{(2^{\#}-\delta)'} + 1\big) \\
\leq & C_\alpha(\|v\|_{H^1(\Omega)}^{2^\#-\delta}+1)
\end{align*}
where in the second inequality we have used the trace inequality $\|v\|_{L^{2^\#-\delta}(\partial \Omega)}\leq C\|v\|_{H^1(\Omega)}$ and the assumption on $c$ in the lemma. We compute by definition of the conjugate
\begin{equation}\label{eqn:Aconjsup}
\begin{aligned}
\mathcal{A}_c^*(v^*) = & \sup_{v\in H^1(\Omega)} \Big\{\la v^*,v\ra_{H^1(\Omega)^*,H^1(\Omega)} - \mathcal{A}_c(v) \Big\} \\
\geq & \sup_{v\in H^1(\Omega)} \Big\{\la v^*,v\ra_{H^1(\Omega)^*,H^1(\Omega)} - C_\alpha\|v\|_{H^1(\Omega)}^{2^\#-\delta}\Big\}-C_\alpha.
\end{aligned}
\end{equation}
Since $2^\#-\delta>1,$ there is a unique maximizer $v_0$ to the latter supremum, and computing the Gateaux derivative, it must satisfy the following relation for all $\xi\in H^1(\Omega):$
$$\la v^*, \xi\ra_{H^1(\Omega)^*,H^1(\Omega)} = (2^\#-\delta)C_\alpha\|v_0\|^{2^\#-\delta-2}_{H^1(\Omega)}(v_0,\xi)_{H^1(\Omega)}.$$
Furthermore as the right hand side is maximized over the $H^1(\Omega)$ unit ball at $ \xi =v_0 / \|v_0\|_{H^1(\Omega)}$, this implies $$\frac{1}{\|v_0\|_{H^1(\Omega)}}\la v^*, v_0\ra_{H^1(\Omega)^*,H^1(\Omega)} = \|v^*\|_{H^{1}(\Omega)^*} =  (2^\#-\delta)C_\alpha\|v_0\|^{2^\#-\delta-1}_{H^1(\Omega)}.$$
Consequently, evaluating the supremum in (\ref{eqn:Aconjsup}) at its maximizer, we find
\begin{align*}
\mathcal{A}_c^*(v^*) \geq & \la v^*,v_0 \ra_{H^1(\Omega)^*,H^1(\Omega)} - C_\alpha \|v_0\|_{H^1(\Omega)}^{2^\#-\delta}-C_\alpha \\
= & (2^\#-\delta-1)C_\alpha\|v_0\|^{2^\#-\delta}_{H^1(\Omega)} -C_\alpha \\
= & \frac{ (2^\#-\delta-1)C_\alpha}{((2^\#-\delta)C_\alpha)^{(2^\#-\delta)'}}\|v^*\|_{H^{1}(\Omega)^*}^{(2^\#-\delta)'} -C_\alpha.
\end{align*}
Up to redefinition of $ C_\alpha$, this completes the lemma.
\end{proof}

\section{Existence of Solutions for the CHR$_\epsilon$ Model}\label{sec:CHRexist}

To characterize the limiting behavior of solutions to the CHR$_\epsilon$ model (\ref{pde:CHRnoElastic}), we introduce a solution concept at the fixed $\epsilon >0$ level in which the optimal dissipation inequality plays a fundamental role. Subsequently, a proposition allows us to connect this weak solution concept to a more standard distributional formulation. Finally, we prove existence of solutions in Lipschitz domains using a minimizing movements scheme.

\begin{definition}\label{def:CHRweakSoln}
Let $\Omega \subset \R^N$ be an open bounded set with Lipschitz boundary. We say that $c$ is a \emph{weak solution of the CHR$_\epsilon$ model} (\ref{pde:CHRnoElastic}) on $\Omega \times (0,T)$ if for $\delta > 0$ as in (\ref{ass:f})
\begin{align*}
& c\in   L^{\infty}(0,T;H^1(\Omega))\cap C([0,T);L^2(\Omega)),\\
& \partial_t c\in L^{(2^\#-\delta)'}(0,T;H^{1}(\Omega)^*), \\
& c(0) = c_0 \in H^1(\Omega),
 \end{align*} and there exists $\mu \in L^{2}(0,T;H^1(\Omega))$ such that the sharp energy dissipation inequality
 \begin{equation}\label{def:CHR-dissipation}
\begin{aligned} 
I_\epsilon[c(T_*)] +\int_0^{T_*} \mathcal{A}_c(\mu) + \mathcal{A}^*_c(-\partial_t c)  \, dt \leq I_\epsilon[c_0]
\end{aligned}
\end{equation}
holds for almost every $T_*$ in $(0,T)$, where $\mu = \mu(x,t)$ is given by
\begin{equation}\label{def:muCHR}
\int_\Omega \mu(t)\xi \, dx =  \int_\Omega \left( \epsilon \nabla c(t)\cdot \nabla \xi +\frac{1}{\epsilon}f'(c(t))    \xi \right) dx,
\end{equation}
for all $\xi \in H^1(\Omega)$ and almost every $t$ in $(0,T)$.
\end{definition}

We remark that (\ref{def:muCHR}) encapsulates the relation $\mu = -\epsilon\Delta c +\frac{1}{\epsilon}f'(c) =  \partial I_\epsilon [c]$ and $\partial_n c = 0$, which in conjunction with (\ref{def:CHR-dissipation}) provides the relation $\mu \in \partial\mathcal{A}^*_c(-\partial_t c),$ as in (\ref{eqn:gradFlowStruct}) and (\ref{def:muViaB}). This connection is discussed in Remark \ref{rmk:epsilonConsistency} below. Thereby (\ref{def:CHR-dissipation}) and (\ref{def:muCHR}) compactly represent the CHR$_\epsilon$ model (\ref{pde:CHRnoElastic}).

In the context of the Cahn--Hilliard and Mullins--Serkerka equations, there are a variety of ways that $H^1(\Omega)$ is considered in relation to its dual $H^1(\Omega)^*$. The chemical potential $\mu\in H^1(\Omega)$ is related to $\partial_t c \in H^1(\Omega)^*$ through the Neumann Laplacian, while  $\partial_t c \in H^1(\Omega)^*$ is realized as the derivative of $c$ through the evolution triple $H^1(\Omega)\hookrightarrow L^2(\Omega) \hookrightarrow H^1(\Omega)^*.$  As such, the setting briefly discussed by Ambrosio, Gigli, and Savar\'e \cite[Section 1.4]{AGS-GradFlows}, which recovers the differential inclusion from the dissipation, does not directly apply, as $c$ is not an absolutely continuous curve in $H^1(\Omega)$. However, this is rectified in our situation via the following proposition. 
\begin{prop}\label{prop:energyDiff}
Let $\Omega \subset \R^N$ be a bounded, open set with $C^2$ boundary. Assume hypothesis (\ref{ass:f}) holds. If $c\in L^\infty(0,T;H^1(\Omega))\cap L^2(0,T;H^2(\Omega)) \cap H^1(0,T;H^1(\Omega)^*)$ with $\mu \in L^2(0,T;H^1(\Omega))$ satisfying (\ref{def:muCHR}), then $t\mapsto I_\epsilon[c(t)]$ is absolutely continuous on $[0,T]$ and for almost every $t$ in $(0,T)$
$$\frac{d}{dt}I_\epsilon[c(t)] = \la\partial_t c(t), \mu(t) \ra_{H^1(\Omega)^*,H^1(\Omega)}.$$
\end{prop}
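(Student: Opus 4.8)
The plan is to prove the chain rule by a time-mollification argument: I would reduce to the classical smooth-in-time case, where the identity is an exercise in differentiating under the integral, and then pass to the limit, isolating the only genuine difficulty, which is the nonlinear term $f'(c)$.

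First I would fix a compact subinterval $[t_1,t_2]\Subset(0,T)$, take a standard temporal mollifier $\rho_h$, and set $c_h(t):=\int\rho_h(t-s)c(s)\,ds$. Because $c\in L^\infty(0,T;H^1(\Omega))\cap L^2(0,T;H^2(\Omega))$, each $c_h(t)$ is an $H^2$-valued average, so $c_h$ is smooth in time with $c_h(t)\in H^2(\Omega)$ and, crucially, $\partial_t c_h(t)=\int\rho_h'(t-s)c(s)\,ds\in H^1(\Omega)$; this is the key gain, since differentiating the mollification in time lands in $H^1(\Omega)$ rather than only in $H^1(\Omega)^*$, precisely because it averages $H^1$-valued data. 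As (\ref{ass:f}) makes $I_\epsilon$ a $C^1$ functional on $H^1(\Omega)$, the map $t\mapsto I_\epsilon[c_h(t)]$ is classically differentiable with
\[
\frac{d}{dt}I_\epsilon[c_h(t)]=\int_\Omega\Big(\tfrac{1}{\epsilon}f'(c_h)\,\partial_t c_h+\epsilon\nabla c_h\cdot\nabla\partial_t c_h\Big)\,dx.
\]
Since (\ref{def:muCHR}) forces $\partial_n c(s)=0$ for a.e.\ $s$, hence $\partial_n c_h(t)=0$, I would integrate the gradient term by parts (here the $C^2$ regularity of $\partial\Omega$ is used) to get $\tfrac{d}{dt}I_\epsilon[c_h]=\int_\Omega\mu_h\,\partial_t c_h\,dx$ with $\mu_h:=-\epsilon\Delta c_h+\tfrac{1}{\epsilon}f'(c_h)\in L^2(\Omega)$, and integrate in time.

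The core of the limit $h\to0$ is the decomposition $\mu_h=\mu*\rho_h+\tfrac{1}{\epsilon}g_h$ with $g_h:=f'(c_h)-f'(c)*\rho_h$, which holds because mollification commutes with $\Delta$ and $-\epsilon\Delta c=\mu-\tfrac1\epsilon f'(c)$. For the linear piece I would use $\mu*\rho_h\to\mu$ in $L^2(t_1,t_2;H^1(\Omega))$ (valid precisely because $\mu\in L^2(0,T;H^1(\Omega))$ is assumed) together with $\partial_t c_h\to\partial_t c$ in $L^2(t_1,t_2;H^1(\Omega)^*)$, so that $\int_{t_1}^{t_2}\la\partial_t c_h,\mu*\rho_h\ra\to\int_{t_1}^{t_2}\la\partial_t c,\mu\ra$; the remainder $\tfrac1\epsilon\int g_h\,\partial_t c_h$ vanishes once $g_h\to0$ in $L^2(t_1,t_2;H^1(\Omega))$, since $\partial_t c_h$ is bounded there in the dual. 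At Lebesgue points $t_1,t_2$ of the $H^1$-valued map $t\mapsto c(t)$ one has $c_h(t_i)\to c(t_i)$ in $H^1(\Omega)$, so continuity of $I_\epsilon$ handles the left-hand side, yielding for a.e.\ $t_1<t_2$
\[
I_\epsilon[c(t_2)]-I_\epsilon[c(t_1)]=\int_{t_1}^{t_2}\la\partial_t c,\mu\ra_{H^1(\Omega)^*,H^1(\Omega)}\,dt.
\]
As $\la\partial_t c,\mu\ra\in L^1(0,T)$ (because $\|\partial_t c\|_{H^1(\Omega)^*},\|\mu\|_{H^1(\Omega)}\in L^2(0,T)$), the right-hand side is absolutely continuous in its endpoints, upgrading $t\mapsto I_\epsilon[c(t)]$ to an absolutely continuous function on $[0,T]$ with the stated a.e.\ derivative; the endpoints are reached using $c\in C([0,T];L^2(\Omega))\cap L^\infty(0,T;H^1(\Omega))$.

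The main obstacle is the convergence $g_h\to0$ in $L^2(t_1,t_2;H^1(\Omega))$, that is, continuity of the Nemytskii map $c\mapsto f'(c)$ into $L^2(\cdot;H^1(\Omega))$. Writing $\nabla f'(c)=f''(c)\nabla c$, the growth bound $|f''(s)|\le C(|s|^{2^*/2-1}+1)$ in (\ref{ass:f}) is exactly what makes $f''(c)\nabla c\in L^2$: by H\"older with $c\in L^\infty(0,T;L^{2^*})$ (from $H^1$) and $\nabla c\in L^2(0,T;L^{2^*})$ (from $H^2$), the conjugate exponents close to give an $L^2$ bound, so $f'(c)\in L^2(0,T;H^1(\Omega))$ and $f'(c)*\rho_h\to f'(c)$ there. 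I would then obtain $f'(c_h)\to f'(c)$ in the same space from $c_h\to c$ in $L^2(t_1,t_2;H^2(\Omega))$ and boundedness in $L^\infty(\cdot;H^1(\Omega))$ via the standard continuity of Nemytskii operators under these growth conditions, passing to an a.e.\ convergent subsequence and applying a Vitali/dominated-convergence argument to $f''(c_h)\nabla c_h$. This nonlinear estimate, rather than the linear bookkeeping, is where the real care is required.
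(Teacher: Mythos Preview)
Your proposal is correct and follows essentially the same route as the paper: time-mollify $c$, write $\mu_h = \mu*\rho_h + \tfrac{1}{\epsilon}\big(f'(c_h) - f'(c)*\rho_h\big)$, and use that $f'(c)\in L^2(0,T;H^1(\Omega))$ via the H\"older estimate pairing $f''(c)\in L^\infty(0,T;L^{2\cdot 2^*/(2^*-2)})$ with $\nabla c\in L^2(0,T;L^{2^*})$ to pass to the limit. The only cosmetic differences are that the paper extends $c$ by reflection in time rather than working on compact subintervals, and computes $\tfrac{d}{dt}I_\epsilon[c_\eta]$ via a symmetrized difference quotient and Taylor's theorem for $f$ (plugging directly into the weak form~(\ref{def:muCHR})) rather than invoking the $C^1$ chain rule and integrating by parts.
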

\begin{proof}
For notational ease, we let $\epsilon = 1$. Extending $c$ by reflection, we may assume $(0,T)$ is replaced by $\R$. For $\phi_\eta:\R \to \R$ a mollifier with radius $\eta>0$, we can mollify in time to find $c_\eta : =  c* \phi_\eta \in C^\infty (\R; H^3(\Omega))$, which converges to $c$ in the space $L^2(0,T;H^1(\Omega))\cap H^1(0,T;H^1(\Omega)^*)$. As $f'(c) \in L^2(0,T;H^1(\Omega))$ by $c$'s regularity assumptions, $\mu_\eta \in L^2(0,T;H^1(\Omega))$ may be defined as in (\ref{def:muCHR}) and can explicitly be written as $\mu_\eta = (\mu - f'(c))*\phi_\eta +f'(c_\eta)$. We have regularized $c_\eta$ only in time, which allows us to retain the relation (\ref{def:muCHR}) between $c_\eta$ and $\mu_\eta$. 

We will show that 
$$\frac{d}{dt}I_1[c_\eta(t)] = \la\partial_t c_\eta(t), \mu_\eta(t) \ra_{H^1(\Omega)^*,H^1(\Omega)}.$$
First for $\tau > 0$, we use Taylor's theorem to rewrite the difference quotient for the nonlinearity $f(c_\eta)$ in terms of $c_\eta$ as
\begin{equation}\label{eqn:taylorfc}
\begin{aligned}
\left|\frac{f(c_\eta (t+\tau)) - f(c_\eta(t))}{\tau}- f'(c_\eta (t))\frac{c_\eta (t+\tau) - c_\eta (t)}{\tau} \right| \leq \frac{1}{2}\|f''\|_{L^\infty(\mathcal{I}_{t,\eta,\tau})}\tau\left(\frac{c_\eta (t+\tau) - c_\eta (t)}{\tau}\right)^2,
\end{aligned}
\end{equation}
where $\mathcal{I}_{t,\eta,\tau}$ is the interval with endpoints $c_\eta(t)$ and $c_\eta(t+\tau)$. Note that like the values $c_\eta$, the length of the interval varies with $x\in \Omega.$
Taking the absolute value and integrating in space, we bound the right-hand side of the above estimate using (\ref{ass:f}), $c_\eta \in L^\infty(0,T;H^1(\Omega)) \hookrightarrow L^\infty(0,T;L^{2^*}(\Omega))$, H\"older's inequality with $p = \frac{2^*2}{2^*-2}$ and $q = \frac{2^*2}{2^*+2}$, the fundamental theorem of calculus, Jensen's inequality, and $\frac{4}{2^*+2}\leq 1$ by
\begin{equation}\label{eqn:taylorfcErr}
\begin{aligned}
&\frac{\tau}{2}\int_\Omega \|f''\|_{L^\infty(\mathcal{I}_{t,\eta,\tau})}\left(\frac{c_\eta (t+\tau) - c_\eta (t)}{\tau}\right)^2 \, dx  \\
& \leq \tau C\left(\|c_\eta\|_{L^\infty(0,T;H^1(\Omega))}^{\frac{2^*  -2}{2}} +1\right) \left(\int_\Omega \left(\intbar_0^\tau \partial_t c_\eta ( t + s) \, ds \right)^{2^*\frac{4}{2^*+2}} dx\right)^{1/q} \\
& \leq \tau^{1-\frac{1}{q}} C\left(\|c_\eta\|_{L^\infty(0,T;H^1(\Omega))}^{\frac{2^*  -2}{2}} +1\right) \left(\left(\int_\Omega \int_0^T (\partial_t c_\eta)^{2^*} \, dt \, dx\right)^{1/q}+1\right) = \tau^{1-\frac{1}{q}} C(c,\eta).
\end{aligned}
\end{equation}

We apply estimates (\ref{eqn:taylorfc}) and (\ref{eqn:taylorfcErr}) centered at $t$ and $t+\tau$ and use the relation (\ref{def:muCHR}) in the following computation:
\begin{equation}\nonumber
\begin{aligned}
\frac{d}{dt}I_\epsilon[c_\eta(t)] = &\lim_{\tau \to 0} \int_\Omega \frac{f(c_\eta(t+\tau) -f(c_\eta(t))}{\tau} +\frac{\|\nabla c_\eta(t+\tau)\|^2 - \|\nabla c_\eta(t)\|^2}{2\tau} \, dx \\
 = &\lim_{\tau \to 0} \int_\Omega \frac{1}{2} \left(f'(c_\eta(t+\tau)) \frac{c_\eta(t+\tau) - c_\eta(t)}{\tau}+ f'(c_\eta(t)) \frac{c_\eta(t+\tau) - c_\eta(t)}{\tau}\right. \\
 &\left. \quad +\frac{(\nabla c_\eta (t+\tau) - \nabla c_\eta(t),\nabla c_\eta(t+\tau))}{\tau} + \frac{(\nabla c_\eta (t+\tau) - \nabla c_\eta(t),\nabla c_\eta(t))}{\tau} \right) dx  \\
 = & \lim_{\tau \to 0}  \int_\Omega \frac{1}{2}\left(\mu_\eta(t+\tau) + \mu_\eta(t) \right) \frac{c_\eta(t+\tau) - c_\eta(t)}{\tau} \, dx \\
= & \la\partial_t c_\eta(t), \mu_\eta(t) \ra_{H^1(\Omega)^*,H^1(\Omega)}.
\end{aligned}
\end{equation}
Consequently, for $0<s<t<T$, by the fundamental theorem of calculus,
$$I_1 [c_\eta (s)] + \int_{s}^t  \la\partial_t c_\eta(\tau), \mu_\eta(\tau) \ra_{H^1(\Omega)^*,H^1(\Omega)}\, d\tau=  I_1 [c_\eta (t)]. $$
Passing to the limit as $\eta \to 0,$ we find 
\begin{equation}\nonumber
 I_1 [c (s)] + \int_{s}^t  \la\partial_t c(\tau), \mu(\tau) \ra_{H^1(\Omega)^*,H^1(\Omega)}\,d\tau=  I_1 [c (t)],
 \end{equation}
for almost all $0<s<t<T$, which concludes the theorem.
\end{proof}

\begin{remark}\label{rmk:epsilonConsistency}
We note that if $c$ is a solution of the CHR$_\epsilon$ model in the sense of Definition \ref{def:CHRweakSoln} satisfying the hypothesis of Proposition \ref{prop:energyDiff} (which given the solution definition only additionally imposes domain regularity and that $\partial_t c \in L^2(0,T;H^1(\Omega)^*)$), then (\ref{def:CHR-dissipation}) implies
$$ \mathcal{A}_{c(t)}(\mu(t)) + \mathcal{A}^*_{c(t)}(-\partial_t c(t)) \leq \la - \partial_t c(t), \mu(t) \ra_{H^1(\Omega)^*,H^1(\Omega)}$$ for almost every $t\in (0,T).$
This inequality shows us that there is equality in the case of the Legendre--Fenchel inequality for $\mathcal{A}_c(\cdot)$ (see (\ref{eqn:legendreFenchelIneq})) and consequently $-\partial_t c(t) \in \partial \mathcal{A}_{c(t)}(\mu(t)),$ which by (\ref{eqn:subdiffToBrelation}) may be reinterpreted as
 \begin{equation}\label{def:CHR-ELeqn}
\begin{aligned} 
-\la \partial_t c(t) , \xi \ra_{H^{1}(\Omega)^*,H^1(\Omega)} = & \int_\Omega \nabla \mu(t)\cdot \nabla \xi \ dx- \int_\Gamma R(c(t),\mu(t)) \xi \, d \mathcal{H}^{N-1} \quad \text{ for all }\xi\in H^1(\Omega),
\end{aligned}
\end{equation}
which is perhaps the expected weak formulation in terms of distributional derivatives. 

Further, from a result of the second author's thesis \cite{stinson-phd}, for a sufficiently regular domain and with $2^\#-\delta = 2$, there exists $c \in L^2(0,T;H^3(\Omega))\cap H^1(0,T;H^1(\Omega)^*)$ such that $c(t) \to c_0$ in $H^1(\Omega)$ as $t\downarrow 0$ and both (\ref{def:muCHR}) and (\ref{def:CHR-ELeqn}) are satisfied. By (\ref{eqn:subdiffToBrelation}) and (\ref{def:CHR-ELeqn}), we have $-\partial_t c(t) \in \partial \mathcal{A}_c(\mu(t))$ for almost every $t$ in $(0,T)$. Then using Proposition \ref{prop:energyDiff} and equality in the Legendre--Fenchel inequality (\ref{eqn:legendreFenchelIneq}), we have
$$\frac{d}{dt}I_\epsilon [c](t) + \mathcal{A}_{c(t)}(\mu(t)) + \mathcal{A}^*_{c(t)}(-\partial_t c(t)) = 0 $$
for almost every $t$ in $(0,T).$ Integrating the above equality on $(s,T_*)$, and then letting $s\downarrow 0$ implies that the optimal energy dissipation relation (\ref{def:CHR-dissipation}) is also satisfied. Consequently under mild regularity assumptions, a weak solution with distributional formulation as in (\ref{def:CHR-ELeqn}) is equivalently characterized by the optimal dissipation relation as in Definition \ref{def:CHRweakSoln}.
\end{remark}

To account for the case when $2^\# - \delta > 2$, i.e., $R$ is superlinear, we will turn directly to the minimizing movements scheme to recover a solution in the sense of Definition \ref{def:CHRweakSoln}. For the reader's convenience, we outline the essential elements of this argument, for which the general strategy is contained in \cite[Chapter 3]{AGS-GradFlows}.

\begin{thm}\label{thm:CHRSolnexist}
Let $\Omega \subset \R^N$ be a bounded, open set with Lipschitz boundary. Assume hypotheses (\ref{ass:f}) to (\ref{ass:R3}) hold. For $T>0$, $\epsilon > 0$, and $c_0\in H^1(\Omega)$, there exists a solution of the CHR model (\ref{pde:CHRnoElastic}) in the sense of Definition \ref{def:CHRweakSoln}. 
\end{thm}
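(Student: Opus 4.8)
The plan is to construct solutions through a minimizing movements (De Giorgi) scheme adapted to the doubly nonlinear inclusion (\ref{eqn:doublyNonlinearDiffInclusion}), where the crucial device is to freeze the $c$-dependence of the metric at the previous time step. Fix a step size $h = T/M$, set $c^0 = c_0$, and inductively let $c^n$ be a minimizer of the functional
$$c \longmapsto I_\epsilon[c] + h\,\mathcal{A}^*_{c^{n-1}}\!\left(\frac{c^{n-1}-c}{h}\right), \qquad c \in H^1(\Omega).$$
Existence of a minimizer follows from the direct method: $I_\epsilon$ is coercive and weakly lower semicontinuous on $H^1(\Omega)$ by (\ref{ass:f}), while $\mathcal{A}^*_{c^{n-1}}$ is convex, lower semicontinuous, and bounded below by Lemma \ref{lem:H1dualbdd} (whose hypothesis $\|c^{n-1}\|_{L^{(2^\#-\delta)'}(\partial\Omega)}\le\alpha$ is verified through the trace embedding and the uniform energy bound obtained below). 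Computing the Euler--Lagrange relation at $c^n$ and invoking (\ref{eqn:subdiffToBrelation})--(\ref{eqn:legendreFenchelIneq}) produces the discrete inclusion $\mu^n := \partial I_\epsilon[c^n] \in \partial\mathcal{A}^*_{c^{n-1}}(\tfrac{c^{n-1}-c^n}{h})$, where $\mu^n$ is characterized by (\ref{def:muCHR}) with $c = c^n$; equivalently, one has the Legendre--Fenchel equality $\mathcal{A}^*_{c^{n-1}}(\tfrac{c^{n-1}-c^n}{h}) + \mathcal{A}_{c^{n-1}}(\mu^n) = \langle \mu^n, \tfrac{c^{n-1}-c^n}{h}\rangle_{H^1(\Omega)^*,H^1(\Omega)}$.

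Next I derive the a priori bounds. Testing minimality against the competitor $c = c^{n-1}$ and using that $\mathcal{A}^*_{c^{n-1}}(0) = -\inf_v \mathcal{A}_{c^{n-1}}(v) \le C$ uniformly in $c^{n-1}$ — a consequence of the coercivity (\ref{ass:G2}) of $-G$, which forces $\mathcal{A}_c(\cdot) \ge -C$ — yields $I_\epsilon[c^n] + h\,\mathcal{A}^*_{c^{n-1}}(\tfrac{c^{n-1}-c^n}{h}) \le I_\epsilon[c^{n-1}] + Ch$. Summing over $n$ and bounding $\mathcal{A}^*$ from below via Lemma \ref{lem:H1dualbdd}, I obtain a uniform energy bound $\sup_n I_\epsilon[c^n] \le C$, hence by (\ref{ass:f}) a uniform $H^1(\Omega)$ bound on the iterates (which supplies the uniform trace bound $\alpha$ and closes the induction above), together with a uniform bound on the discrete velocity in $L^{(2^\#-\delta)'}(0,T;H^1(\Omega)^*)$. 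A bound on $\mu$ in $L^2(0,T;H^1(\Omega))$ then follows from the dissipation, since the coercivity of $\mathcal{A}_c(\mu)$ in $H^1(\Omega)$ (gradient term plus boundary $L^2$-term, again by (\ref{ass:G2})) converts control of $\sum_n h\,\mathcal{A}_{c^{n-1}}(\mu^n)$ into the desired estimate.

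I then introduce the piecewise-constant interpolant $\bar c_h$, the piecewise-affine interpolant $\hat c_h$, and the De Giorgi variational interpolant $\tilde c_h$. By the bounds above, $\hat c_h$ is bounded in $L^\infty(0,T;H^1(\Omega)) \cap W^{1,(2^\#-\delta)'}(0,T;H^1(\Omega)^*)$, so the Aubin--Lions--Simon lemma gives, along a subsequence, $\hat c_h \to c$ in $C([0,T];L^2(\Omega))$, weak-$*$ in $L^\infty(0,T;H^1(\Omega))$, with $\partial_t\hat c_h \rightharpoonup \partial_t c$ in $L^{(2^\#-\delta)'}(0,T;H^1(\Omega)^*)$ and $\mu_h \rightharpoonup \mu$ in $L^2(0,T;H^1(\Omega))$; all three interpolants share the same limit $c$. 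The strong $L^2$-convergence, and hence strong trace convergence on $\partial\Omega$, lets me identify $\mu = \partial I_\epsilon[c]$ in the sense of (\ref{def:muCHR}), passing to the limit weakly in the linear gradient term and in $f'(c_h)$ via the growth bound of (\ref{ass:f}) together with almost-everywhere convergence. The initial datum $c(0) = c_0$ and the continuity $c \in C([0,T);L^2(\Omega))$ follow directly from the $C([0,T];L^2)$-convergence, matching the regularity required in Definition \ref{def:CHRweakSoln}.

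Finally, the sharp dissipation inequality (\ref{def:CHR-dissipation}) is recovered by passing to the limit in the discrete energy--dissipation identity attached to the variational interpolant, using the joint lower semicontinuity of $(c,v)\mapsto \mathcal{A}_c(v)$ and $(c,v^*)\mapsto\mathcal{A}^*_c(v^*)$ — convexity in the second slot combined with strong convergence of $\bar c_h$ and its trace in the first slot — and the lower semicontinuity of $I_\epsilon$. I expect the main obstacle to be precisely this recovery of the \emph{sharp} inequality rather than one with slack: because $I_\epsilon$ is nonconvex (the double well $f$), the discrete subgradient estimate $I_\epsilon[c^n] - I_\epsilon[c^{n-1}] \le \langle\mu^n, c^n - c^{n-1}\rangle$ is unavailable, which is exactly why the De Giorgi variational interpolant is indispensable — its associated Moreau--Yosida differential inequality yields an energy--dissipation estimate that does not invoke convexity and survives the limit through lower semicontinuity. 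A secondary delicate point is the frozen $c$-dependence in the metric $\mathcal{A}^*_{c^{n-1}}$, whose limit requires the strong trace convergence of the interpolants on $\partial\Omega$ and the joint lower semicontinuity of $\mathcal{A}$ and $\mathcal{A}^*$ under the superlinear growth of $G$ and $R$; this coupling, rather than the otherwise standard compactness, is where the argument must be handled with the greatest care.
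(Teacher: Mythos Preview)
Your proposal is correct and follows essentially the same route as the paper: minimizing movements with the metric frozen at the previous step, De Giorgi's variational interpolant to obtain the sharp discrete energy--dissipation inequality (which the paper carries out via the auxiliary function $F(t)=I_\epsilon[\tilde c(t)]+t\,\mathcal{A}^*_{c_0}(-(\tilde c(t)-c_0)/t)$ and the bound $\partial_+F\le-\mathcal{A}_{c_0}(\tilde\mu)$), Aubin--Lions--Simon compactness, and lower semicontinuity to pass to the limit. Two minor points worth noting: the paper obtains the trace convergence by enlarging the Aubin--Lions triple to track $(c,\mathrm{Tr}\,c)$ rather than inferring it directly from $L^2(\Omega)$-convergence, and the lower semicontinuity of $\int_0^{T_*}\mathcal{A}^*_c(-\partial_t c)\,dt$ is handled there by approximating the supremum over a countable dense family $\{v_i\}\subset H^1\cap C(\bar\Omega)$ and using monotone convergence---your ``joint lower semicontinuity'' assertion is correct but this is precisely where the details need care.
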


\begin{proof}
\textbf{Step 1: Minimizing movements scheme.} For $\tau =\tau_m := T/m>0$,  with $m\in \mathbb{N},$ we define the following iterative scheme. Let $c_\tau^0 = c_0.$ Define 
\begin{equation} \label{minmovscheme}
c_\tau^i := \operatorname*{argmin}\limits_{c\in H^1(\Omega)} \left\{ I_\epsilon[c] + \tau \mathcal{A}_{c^{i-1}_\tau}^*\left(-\frac{c - c_\tau^{i-1}}{\tau}\right) \right\}, 
\end{equation}
where $I_\epsilon$ and $\mathcal{A}^*$ are defined in (\ref{def:energy}) and (\ref{def:functAconj}), respectively. As in \cite[Lemma 7]{kraus2017}, by the direct method, minimizers exist.

Define $c_\tau^-$ the piecewise constant function
\begin{equation}\nonumber
c_\tau^-(t) : = 
c^i_\tau \quad \text{if } t\in[i\tau,(i+1)\tau),
\end{equation} and $\hat c_\tau$ the linear interpolant
\begin{equation}\nonumber
\hat c_\tau(t) : = 
\frac{(i+1)\tau-t}{\tau}c^i_\tau+\frac{t - i\tau}{\tau}c^{i+1}_\tau \quad \text{if } t\in[i\tau,(i+1)\tau).
\end{equation} 
We now introduce the variational interpolant $\tilde c_\tau$ given by $\tilde c_\tau (i\tau ) := c_\tau^i$ and for all other $t$
\begin{equation} \label{minmovscheme:VarInterp}
\tilde c_\tau(t) := \text{argmin}_{c\in H^1(\Omega)} \left\{ I_\epsilon[c] + (t-\tau\lfloor t/\tau\rfloor) \mathcal{A}_{c^{-}_\tau}^*\left(-\frac{c - c_\tau^-(t)}{t-\tau\lfloor t/\tau\rfloor }\right) \right\}.
\end{equation}
Motivated by (\ref{def:muViaB}), we use a difference quotient for the time derivative and define the interpolating chemical potential $\tilde \mu_\tau$ by 
\begin{equation}\label{def:discChemPot}
\tilde \mu_\tau (t) : = \ov{\mathcal{B}}\left(c^{-}_\tau(t), -\frac{\tilde c_\tau(t) - c^{-}_\tau(t)}{t-\tau\lfloor t/\tau\rfloor}\right) \quad \text{ for } t\neq \tau\lfloor t/\tau\rfloor.
\end{equation}

\textbf{Step 2: Optimal dissipation for approximation.} We show that for $i\in \N_0$
\begin{equation}\label{eqn:discDissRelation}
I_\epsilon[c_\tau^{i+1}] + \int_{i\tau}^{(i+1)\tau} \left(\mathcal{A}_{c_\tau^-} (\tilde \mu_\tau) + \mathcal{A}_{c_\tau^-}^* (-\partial_t \hat c_\tau) \right) dt \leq I_\epsilon[c_\tau^i].
\end{equation}
For simplicity, we drop the subscript $\tau$ and, without loss of generality, suppose that $i = 0.$ Define the function 
\begin{equation}\nonumber
F(t) : =I_\epsilon [\tilde c(t)]  +t \mathcal{A}^*_{c_0}\left(-\frac{\tilde c(t) - c_0}{t}\right).
\end{equation}
We \textbf{claim} that $F(t)$ is continuous on $(0,\tau]$ and that for all $t\in (0,\tau)$
\begin{equation}\label{eqn:limsupDeriv}
\partial_+ F(t) : =\limsup_{s\downarrow t}\frac{F(s) - F(t)}{s-t} \leq -\mathcal{A}_{c_0}(\tilde \mu (t)).
\end{equation}

Supposing momentarily that we have proven the claim, we show how to obtain (\ref{eqn:discDissRelation}). By Lemma \ref{lem:H1dualbdd}, $\mathcal{A}_{c_0}(\tilde \mu (t))> -C$ for some $C>0$ for all $t$. Using this bound in (\ref{eqn:limsupDeriv}), we have that $\partial_+(F(t) -Ct) < 0$ for all $t \in (0,\tau)$. To see that $t \mapsto F(t) - Ct$ is non-increasing suppose to the contrary that there are $t$ and $s$ in $(0,\tau)$ such that $t<s$ and $$F(t) - Ct< F(s) - Cs,$$ and define $$t_0 : = \inf\{t':t<t', \   F(t) - Ct< F(t') - Ct'\}.$$ By continuity, $F(t_0) - Ct_0 = F(t) - Ct $, and $\partial_+(F(t_0) -Ct_0) \geq 0$, a contradiction confirming that the function $F(t) - Ct$ is non-increasing.
From this, it follows that $F$ belongs to $BV_{loc}((0,\tau])$ with $D_a F = \partial_+ F \mathcal{L}^1\llcorner(0,\tau)$ and the singular part of its derivative is non-increasing, i.e., $\frac{dD_s F}{d|D_s F|} \leq 0$. We integrate and use the relation (\ref{eqn:limsupDeriv}) on $(s,\tau)$ to find
\begin{equation}\label{eqn:tempDiss}
F(\tau) -F(s) = \int_s^\tau dDF \leq \int_s^\tau -\mathcal{A}_{c_0}(\tilde \mu (t))\, dt.
\end{equation} By the definition of $F(s),$ we have that
$F(s) \leq I_\epsilon[c_0] + s\mathcal{A}^*_{c_0}(0)$. Using this inequality in (\ref{eqn:tempDiss}), we recover (\ref{eqn:discDissRelation}) by letting $s\to 0.$

We now prove the claim. First, to see that $F$ is continuous on $(0,\tau]$ (it is always finite), suppose that there is $t_i \to t$ such that $\lim_{i\to \infty} F(t_i) = \alpha \neq F(t).$ If $\alpha <F(t)$, then up to a subsequence, there is $\bar c \in H^1(\Omega)$ such that $\tilde c(t_i)\wkto \bar c$ weakly in $H^1(\Omega).$ Up to a further subsequence, $\frac{\tilde c(t_i) - c_0}{t_i} \to \frac{\bar c - c_0}{t}$ strongly in $H^1(\Omega)^*$. Applying lower semi-continuity of $I_\epsilon$ and continuity of $\mathcal{A}^*$, we have
\begin{equation}\nonumber
I_\epsilon[\bar c] + t \mathcal{A}^*_{c_0} \left(-\frac{\bar c - c_0}{t}\right) \leq \alpha < F(t),
\end{equation}
a contradiction.
Similarly, if $\alpha > F(t)$, we can contradict the definition of $F(t_i)$ by using $\tilde c(t)$ as a competitor for $t_i$ sufficiently close to $t$. This concludes the proof of continuity.

To obtain the bound (\ref{eqn:limsupDeriv}), let $t<s$ and note by optimality of $\tilde c(s)$ at $s,$ one has
\begin{equation}\nonumber
F(s)\leq F(t) + s\mathcal{A}^*_{c_0} \left(-\frac{\tilde c(t) - c_0}{s}\right) -t\mathcal{A}^*_{c_0} \left(-\frac{\tilde c(t) - c_0}{t}\right).
\end{equation}
Rearranging the above inequality and dividing by $s-t$, we find (\ref{eqn:limsupDeriv}) using (\ref{eqn:legendreFenchelIneq}), (\ref{def:discChemPot}), and (\ref{eqn:subdiffH1}) as follows:
\begin{equation}\nonumber
\begin{aligned}
\limsup_{s\downarrow t}\frac{F(s) - F(t)}{s-t} \leq & \,\frac{d}{ds}\Big|_{s=t} \left[s\mathcal{A}^*_{c_0} \left(-\frac{\tilde c(t) - c_0}{s}\right)\right] \\
= & \, \mathcal{A}^*_{c_0} \left(-\frac{\tilde c(t) - c_0}{t}\right) + t\left\la\partial  \mathcal{A}^*_{c_0} \left(-\frac{\tilde c(t) - c_0}{t}\right), \frac{\tilde c(t) - c_0}{t^2}\right\ra \\
= & \, \mathcal{A}^*_{c_0} \left(-\frac{\tilde c(t) - c_0}{t}\right) - \left\la  -\frac{\tilde c(t) - c_0}{t},  \tilde \mu (t)\right\ra_{H^1(\Omega)^*,H^1(\Omega)}  = -\mathcal{A}_{c_0}(\tilde \mu (t)).
\end{aligned}
\end{equation}

\textbf{Step 3: Compactness.} Telescoping, the dissipation relation (\ref{eqn:discDissRelation}) implies that for $i \in \N_0$
\begin{equation}\label{eqn:discDissLong}
I_\epsilon[\tilde c(i\tau)] +\int_0^{i\tau}  \left(\mathcal{A}_{c_\tau^-} (\tilde \mu_\tau) + \mathcal{A}_{c_\tau^-}^* (-\partial_t \hat c_\tau) \right) dt \leq I_\epsilon[c_0].
\end{equation}
Further, taking outer variations at the minimum $\tilde c_\tau$ from (\ref{minmovscheme:VarInterp}) in the direction of $\xi \in H^1(\Omega)$ identified as an element of $H^1(\Omega)^*$ via the triple $\xi \in H^1(\Omega)\hookrightarrow L^2(\Omega) \hookrightarrow H^1(\Omega)^*$ and using the definition of $\tilde \mu_\tau$ in (\ref{def:discChemPot}), for almost every $t\in (0,T)$, we have
\begin{equation}\label{eqn:discChemPotEL}
(\tilde \mu_\tau(t),\xi)_{L^2(\Omega)} =  \int_\Omega \big( \nabla \tilde c_\tau(t)\cdot \nabla \xi +f'(\tilde c_\tau(t))    \xi \big)\, dx.
\end{equation}
To obtain compactness for passing to the limit in these equations, we directly estimate norms. 
First, from (\ref{eqn:discDissLong}), (\ref{ass:G2}), and Lemma \ref{lem:H1dualbdd}, we have that 
\begin{equation}\label{eqn:energyBounds}
\begin{aligned}
\|\tilde c_\tau \|_{L^\infty(0,T;H^1(\Omega))} + \| c_\tau^- \|_{L^\infty(0,T;H^1(\Omega))} +\|\hat c_\tau \|_{L^\infty(0,T;H^1(\Omega))} & \leq C, \\
\|\tilde \mu_\tau\|_{L^2(0,T;H^1(\Omega))} + \|\partial_t \hat c_\tau\|_{L^{(2^\# -\delta)'}(0,T;H^1(\Omega)^*)} & \leq  C
\end{aligned}
\end{equation}
for some $C>0.$

Using the second bound from (\ref{eqn:energyBounds}), one can show by Jensen's inequality that
\begin{equation}\label{eqn:almostALShyp}
\int_0^{T-\delta} \|c_\tau^-(t+\delta) - c_\tau^- (t)\|^{(2^\# - \delta)'}_{H^1(\Omega)^*} \, dt \to 0 \text{ uniformly in }\tau \text{ as }\delta \to 0.
\end{equation}
From the definition of $\tilde c_\tau$ in (\ref{minmovscheme:VarInterp}) and Lemma \ref{lem:H1dualbdd}, one has that (\ref{eqn:almostALShyp}) holds for $\tilde c_\tau$ too. Consequently, by the Aubin--Lions--Simon compactness theorem \cite{simonCompact}, we have that $c_\tau^-, \ \hat c_\tau,$ and $\tilde c_\tau$ converge to $c \in L^\infty (0,T;H^1(\Omega))\cap W^{1,(2^\#-\delta)'}(0,T;H^1(\Omega)^*)$ in $L^{(2^\#-\delta)'}(0,T;L^2(\Omega))$. Note it is not a priori clear that the limits coincide, but this directly checked as in \cite[Theorem 1]{henselStinson-weakSolnMS} or \cite[Lemma 4.2.7]{stinson-phd}. Further, if one keeps track of the traces, using the triple $$c \in H^1(\Omega) \mapsto (c,{\rm Tr}(c)) \in L^2(\Omega) \times L^{(2^\# -\delta)'}(\partial \Omega) \mapsto c\in L^2(\Omega)\subset  H^1(\Omega)^*$$ as in \cite[Lemma 4.2.7]{stinson-phd}, we also find that ${\rm Tr}(c_\tau^-) \in L^\infty (0,T ; L^{(2^\# -\delta)'}(\partial \Omega))$ converges to ${\rm Tr}(c)$ strongly in $L^2(0,T;L^{(2^\# -\delta)'}(\partial \Omega))$ strongly. Lastly, it is clear that $\tilde \mu_\tau$ converges weakly in $L^2(0,T;H^1(\Omega))$ to some $\mu.$

\textbf{Step 4: Passing to the limit.} To pass to the limit in (\ref{eqn:discChemPotEL}) and recover (\ref{def:CHR-ELeqn}), one can integrate in time, pass to the limit $\tau \downarrow 0$, and then localize in time. To pass to the limit in the dissipation (\ref{eqn:discDissLong}), one can apply the argument of Theorem \ref{thm:BVsolnExist} to see that 
$$\int_0^s\left(\mathcal{A}_{c} ( \mu) + \mathcal{A}_{c}^* (-\partial_t  c) \right) dt\leq \liminf_{\tau \to 0} \int_0^s \left(\mathcal{A}_{c_\tau^-} (\tilde \mu_\tau) + \mathcal{A}_{c_\tau^-}^* (-\partial_t \hat c_\tau) \right) dt.$$ To obtain the limiting dissipation inequality, fix $s <T^*$ and suppose $\tau<T_*-s$. Then using Lemma \ref{lem:H1dualbdd} and (\ref{ass:G2}) to control the error for increasing the domain of integration from $(0,s)$ to $(0,\tau \lfloor T_*/\tau \rfloor)$ and applying (\ref{eqn:discDissLong}), we have
\begin{equation}\nonumber
\begin{aligned}
& I_\epsilon [c_\tau^- (T_*)] + \int_0^s \left(\mathcal{A}_{c_\tau^-} (\tilde \mu_\tau) + \mathcal{A}_{c_\tau^-}^* (-\partial_t \hat c_\tau) \right) dt  \\
& \leq I_\epsilon [c_\tau^- (\tau \lfloor T_*/\tau \rfloor)] + \int_0^{\tau \lfloor T_*/\tau \rfloor} \left(\mathcal{A}_{c_\tau^-} (\tilde \mu_\tau) + \mathcal{A}_{c_\tau^-}^* (-\partial_t \hat c_\tau) \right) dt + C(T_* -s)  \leq I_\epsilon[c_0]+ C(T_* -s).
\end{aligned}
\end{equation}
Noting that for almost every $T_*$ in $(0,T)$ we have $I_\epsilon [c(T_*)]\leq \liminf_{\tau\to 0} I_\epsilon [c_\tau^-(T_*)]$, one can directly apply lower semi-continuity to the above display and then let $s\uparrow T_*$ to conclude (\ref{def:CHR-dissipation}).
\end{proof}

\begin{remark}
We note that to obtain the optimal dissipation relation with minimal requirements on the domain geometry, it appears necessary to use De Giorgi's variational interpolation \cite{AGS-GradFlows} as was done in the previous proof. For comparison, in the case of a weak solution concept for a Cahn--Hilliard type equation without optimal dissipation \cite{garcke-CHdynamicBC}, there is no need to turn to variational interpolation. In the case of the viscous CHR model, the regularizing viscosity term gives that $\partial_t \hat c_\tau \in L^2(0,T;L^2(\Omega))$ allowing for an alternative argument for the optimal dissipation \cite{kraus2017}.
\end{remark}

\section{Convergence to the Sharp Interface Model}\label{sec:sharpInterface}

In this section, we relate the CHR$_\epsilon$ model (\ref{pde:CHRnoElastic}) to a free boundary model, whereby the underlying phenomena of phase coarsening is explicitly witnessed as a geometric interface evolution. 
The work of Meca, et al. \cite{meca_munch_wagner_2018} developed formal asymptotic expansions for the related viscous CHR$_\epsilon$ model, which indicates that solutions of the CHR$_\epsilon$ model (\ref{pde:CHRnoElastic}) should converge to solutions of the Mullins--Sekerka reaction model, whose strong formulation we introduce below. Our goal is to make this connection rigorous.

\begin{definition}\label{def:strongSolnMSR}
\begin{subequations}\label{eqn:strongMSR}
Let $N \geq 2$ and let $\Omega \subset \R^N$ be a bounded domain with $C^2$ boundary~$\p\Omega$. For $T>0$, 
let $\mathcal{S}=(\mathcal{S}(t))_{t \in [0,T)}$
be a time-dependent family of open subsets $\mathcal{S}(t) \subset \Omega$ such that $\Sigma(t) : = \partial \mathcal{S}(t) \cap \Omega$ is a smoothly evolving surface with inward-pointing normal $n_{\Sigma (t)}$.
For every $t \in [0,T)$, we denote
by $V_{\Sigma(t)}$ and $H_{\Sigma(t)}$ the associated normal velocity vector
and mean curvature vector, respectively. The family~$\mathcal{S}$ is said to be a \emph{strong solution of
the Mullins--Sekerka reaction (MSR) model} if for each $t \in (0,T)$ there exists
a chemical potential $\mu(\cdot,t)$ so that
\begin{align}
\label{eqn:MSR1}
\Delta \mu(\cdot,t) &= 0
&& \text{in } \Omega \setminus \Sigma(t),
\\ \label{eqn:MSR2}
V_{\Sigma(t)} &= - \jump{\partial_{n_{\Sigma(t)}} \mu } n_{\Sigma(t)}
&& \text{on } \Sigma(t),
\\ \label{eqn:MSR3}
\sigma H_{\Sigma(t)} &= \mu(\cdot,t) n_{\Sigma(t)}
&& \text{on } \Sigma(t),
\\ \label{eqn:MSR4}
\jump{\mu(\cdot,t)} &= 0
&&\text{on } \Sigma(t),
\\ \label{eqn:MSR5}
\partial_{n_{\p\Omega}} \mu(\cdot,t) &= R(\chi_{\overline{\mathcal{S}}(t)},\mu(\cdot,t))
&&\text{on } \p\Omega\setminus \partial \Sigma(t),
\\ \label{eqn:MSR6}
\partial \Omega \measuredangle \Sigma &= 90^\circ
&&\text{on } \p\Omega\cap \partial \Sigma(t),
\end{align}
where $\chi_{\overline{S}}$ is the characteristic function of the set $\overline{S}$ and the jump is oriented in the direction of the normal, i.e., such that $\jump{\partial_{n_{\Sigma}} \mu } = (\nabla \mu|_{\overline{S}} - \nabla \mu|_{\Omega \setminus S})\cdot n_{\Sigma}.$
\end{subequations}
\end{definition}

As singularities may develop in finite time, existence of strong solutions for general initial values is too optimistic, and we will focus our attention on a weak formulation of the MSR model. Within the rest of the paper, we will  \textbf{denote a weak solution of the CHR$_\epsilon$ model (\ref{pde:CHRnoElastic}) by $c_\epsilon$ with an associated chemical potential given by $\mu_\epsilon.$}
As is typically the program, we will prove compactness of $c_\epsilon$ in the appropriate function spaces, and then we will identify the limit as a weak solution of the MSR model, defined as follows.
\begin{definition}\label{def:BVsolnMSR}
Let $\Omega \subset \R^N$ be a bounded, open set with $C^2$ boundary. We say that $(c,\mu)$ is a \emph{BV(Bounded Variation)-solution of the MSR model} on $\Omega \times (0,T)$ if
\begin{align*}
& c\in   L^{\infty}(0,T;BV(\Omega;\{0,1\}))\cap C([0,T);H^1(\Omega)^*),\\
& \partial_t c\in L^{(2^\# - \delta)'}(0,T;H^{1}(\Omega)^*), \\
& c(0) = c_0 \in BV(\Omega;\{0,1\})\subset H^1(\Omega)^*, \\
& \mu \in L^2(0,T;H^1(\Omega)),
 \end{align*} and the optimal dissipation inequality
 \begin{equation}\label{def:MSR-dissipation}
\begin{aligned} 
I_0[c(T_*)] +\int_0^{T_*} \mathcal{A}_c(\mu) + \mathcal{A}^*_c(-\partial_t c) \, dt \leq I_0[c_0]
\end{aligned}
\end{equation}
holds for almost every $T_*$ in $(0,T)$,
where the chemical potential $\mu = \mu(x,t)$ is characterized by the Gibbs--Thomson relation (with $t$ suppressed)
\begin{equation}\label{def:GTrelationBV}
\int_\Omega {\rm div}(\mu \Psi) c \, dx  =  \sigma \int_\Omega \left( \mathbb{I} - \frac{Dc }{|Dc|} \otimes \frac{Dc }{|Dc|} \right):\nabla \Psi \, d|D c| ,
\end{equation}
which holds for almost every $t$ in $(0,T)$ and all $\Psi \in C^1(\Omega;\R^N)$ with $\Psi \cdot n_{\partial \Omega} = 0$ on $\partial \Omega$.
\end{definition} 

We note that the right hand side of (\ref{def:GTrelationBV}) is the first variation of the surface energy (\ref{def:energyPer}), which, when a surface is sufficiently regular, can be expressed in terms of the curvature \cite{MaggiBook}.
Furthermore, in parallel to our weak solution concept for the CHR$_\epsilon$ model: (\ref{def:GTrelationBV}) encapsulates the lower order relations (\ref{eqn:MSR3}) and (\ref{eqn:MSR6}), and the dissipation (\ref{def:MSR-dissipation}) covers the rest. To make these relations precise and to strengthen the weak solution concept's connection to the strong formulation, our first result shows that if $(c,\mu)$ is a sufficiently regular BV-solution, then it is a strong solution of the the MSR model as in Definition \ref{def:strongSolnMSR}.

\begin{thm} (Consistency)\label{thm:consistency}
Let $\Omega \subset \R^N$ be a bounded, open set with $C^2$ boundary. Suppose $(c,\mu)$ is a BV-solution of the MSR model, as in Definition \ref{def:BVsolnMSR}, such that $\mathcal{S} (t) := {\rm int}\{c = 1\}$ has boundary in $\Omega$ given by $\Sigma(t) := \partial \mathcal{S}(t) \cap \Omega$, which is a smoothly evolving surface, and $\mu \in C^\infty (\overline{\Omega \cap \{c=1\}}) \cap C^\infty (\overline{\Omega \cap \{c=0\}})$, then $\mathcal{S}$ is a strong solution of the MSR model as in Definition \ref{def:strongSolnMSR}.
\end{thm}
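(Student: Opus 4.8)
The plan is to split the statement along the lines indicated after Definition~\ref{def:BVsolnMSR}: the Gibbs--Thomson relation (\ref{def:GTrelationBV}) encodes the lower-order identities (\ref{eqn:MSR3}) and (\ref{eqn:MSR6}), while the optimal dissipation inequality (\ref{def:MSR-dissipation}) yields (\ref{eqn:MSR1}), (\ref{eqn:MSR2}), (\ref{eqn:MSR4}), and (\ref{eqn:MSR5}). Throughout I use that, under the stated regularity, $c(t)=\chi_{\mathcal{S}(t)}$ with $\mathcal{S}(t)={\rm int}\{c=1\}$, that $|Dc(t)|=\mathcal{H}^{N-1}\llcorner\Sigma(t)$ with $\frac{Dc}{|Dc|}=n_{\Sigma}$ the inward normal, and that $\mu(t)$ restricts to a smooth function on each of $\overline{\Omega\cap\{c=1\}}$ and $\overline{\Omega\cap\{c=0\}}$, with one-sided traces $\mu^{\pm}$ and $\partial_{n_{\Sigma}}\mu^{\pm}$ along $\Sigma(t)$.

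First I would derive (\ref{eqn:MSR3}) and (\ref{eqn:MSR6}). On the left-hand side of (\ref{def:GTrelationBV}), writing $c=\chi_{\mathcal{S}}$ and applying the divergence theorem on $\mathcal{S}$ gives $\int_\Omega{\rm div}(\mu\Psi)c\,dx=-\int_{\Sigma}\mu\,(\Psi\cdot n_{\Sigma})\,d\mathcal{H}^{N-1}$, the contribution from $\partial\mathcal{S}\cap\partial\Omega$ dropping out since $\Psi\cdot n_{\partial\Omega}=0$ there. On the right-hand side, $\mathbb{I}-\frac{Dc}{|Dc|}\otimes\frac{Dc}{|Dc|}$ is the tangential projection onto $\Sigma$, so the integrand equals the tangential divergence ${\rm div}_{\Sigma}\Psi$; the tangential divergence theorem then rewrites the right-hand side as $-\sigma\int_{\Sigma}\Psi\cdot H_{\Sigma}\,d\mathcal{H}^{N-1}+\sigma\int_{\partial\Sigma}\Psi\cdot\nu\,d\mathcal{H}^{N-2}$, with $\nu$ the outer conormal. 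Testing first against $\Psi\in C^1_c(\Omega;\R^N)$ kills the $\partial\Sigma$-term and, by the fundamental lemma, yields $\sigma H_{\Sigma}=\mu n_{\Sigma}$ on $\Sigma$, which is (\ref{eqn:MSR3}). Feeding this back into the identity for general admissible $\Psi$ cancels the interior terms and leaves $\int_{\partial\Sigma}\Psi\cdot\nu\,d\mathcal{H}^{N-2}=0$ for every $\Psi$ tangent to $\partial\Omega$; decomposing $\nu$ into its $n_{\partial\Omega}$-component and its $\partial\Omega$-tangential component forces the latter to vanish, so $\nu=n_{\partial\Omega}$ along $\partial\Sigma$, which is exactly the $90^\circ$ contact condition (\ref{eqn:MSR6}).

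Next I would exploit the dissipation inequality, the key intermediate step being an \emph{energy-dissipation identity}. Using the kinematics of the smoothly evolving surface, $\partial_t c$ pairs as $\la\partial_t c,\xi\ra_{H^1(\Omega)^*,H^1(\Omega)}=-\int_{\Sigma}(V_{\Sigma}\cdot n_{\Sigma})\xi\,d\mathcal{H}^{N-1}$, and the first variation of area gives $\frac{d}{dt}I_0[c(t)]=-\sigma\int_{\Sigma}H_{\Sigma}\cdot V_{\Sigma}\,d\mathcal{H}^{N-1}+\sigma\int_{\partial\Sigma}V_{\Sigma}\cdot\nu\,d\mathcal{H}^{N-2}$. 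The contact-line term vanishes because (\ref{eqn:MSR6}) makes $n_{\Sigma}\perp n_{\partial\Omega}$, so $V_{\Sigma}$ (normal to $\Sigma$) is tangent to $\partial\Omega$ while $\nu=n_{\partial\Omega}$; substituting (\ref{eqn:MSR3}) for $\sigma H_{\Sigma}$ then gives $\frac{d}{dt}I_0[c(t)]=-\int_{\Sigma}\mu\,(V_{\Sigma}\cdot n_{\Sigma})\,d\mathcal{H}^{N-1}=\la\partial_t c,\mu\ra_{H^1(\Omega)^*,H^1(\Omega)}$, the trace of $\mu$ on $\Sigma$ being well defined since $\mu\in H^1(\Omega)$. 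Integrating, $I_0[c_0]-I_0[c(T_*)]=\int_0^{T_*}\la-\partial_t c,\mu\ra\,dt$. The Legendre--Fenchel inequality (\ref{eqn:legendreFenchelIneq}) gives $\mathcal{A}_c(\mu)+\mathcal{A}^*_c(-\partial_t c)\geq\la-\partial_t c,\mu\ra$ pointwise in $t$; combined with (\ref{def:MSR-dissipation}) this squeezes both estimates to equalities, so for a.e.\ $t$ there is equality in Legendre--Fenchel, hence $-\partial_t c\in\partial\mathcal{A}_c(\mu)$. By (\ref{eqn:subdiffToBrelation}) this is the weak equation
\begin{equation}\nonumber
-\la\partial_t c,\xi\ra_{H^1(\Omega)^*,H^1(\Omega)}=\int_\Omega\nabla\mu\cdot\nabla\xi\,dx-\int_{\partial\Omega}R(c,\mu)\xi\,d\mathcal{H}^{N-1}\qquad\text{for all }\xi\in H^1(\Omega).
\end{equation}
Finally I would unpack this by integrating by parts separately over $\{c=1\}$ and $\{c=0\}$, where $\mu$ is smooth: localizing $\xi$ in the interior away from $\Sigma$ gives $\Delta\mu=0$ in $\Omega\setminus\Sigma$, i.e.\ (\ref{eqn:MSR1}); $\mu\in H^1(\Omega)$ forces the two traces to agree, giving $\jump{\mu}=0$, i.e.\ (\ref{eqn:MSR4}); localizing $\xi$ across $\Sigma$ and using $\Delta\mu=0$ matches $-\int_\Sigma(V_\Sigma\cdot n_\Sigma)\xi=-\int_\Sigma\jump{\partial_{n_\Sigma}\mu}\xi$, yielding (\ref{eqn:MSR2}); and localizing $\xi$ near $\partial\Omega\setminus\partial\Sigma$ gives $\partial_{n_{\partial\Omega}}\mu=R(\chi_{\overline{\mathcal{S}}},\mu)$, i.e.\ (\ref{eqn:MSR5}). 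The main obstacle is the energy-dissipation identity $\frac{d}{dt}I_0[c(t)]=\la\partial_t c,\mu\ra$: this is where the moving contact line must be controlled and where the sign conventions for $V_\Sigma$, $H_\Sigma$, the conormal $\nu$, and the distributional $\partial_t c$ all have to be reconciled, with the $90^\circ$ angle (\ref{eqn:MSR6}) doing the essential work of annihilating the boundary contribution so that the first variation of $I_0$ collapses cleanly onto the $H^1(\Omega)^*$--$H^1(\Omega)$ pairing.
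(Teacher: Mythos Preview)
Your proposal is correct and follows essentially the same approach as the paper: first extract (\ref{eqn:MSR3}) and (\ref{eqn:MSR6}) from the Gibbs--Thomson relation, then use these to establish the energy-dissipation identity $\frac{d}{dt}I_0[c]=\la\partial_t c,\mu\ra$, force equality in Legendre--Fenchel via (\ref{def:MSR-dissipation}), and localize the resulting weak equation to recover the remaining conditions. The only cosmetic difference is that the paper derives the energy-dissipation identity by applying the Gibbs--Thomson relation (\ref{def:GTrelationBV}) directly with $\Psi$ an extension of $V_\Sigma$ tangent to $\partial\Omega$ (the extension existing precisely because of (\ref{eqn:MSR6})), whereas you compute the first variation of area explicitly and then substitute (\ref{eqn:MSR3}); these are equivalent.
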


\begin{proof}[Proof of Theorem \ref{thm:consistency}]
We recover the lower order conditions within (\ref{eqn:strongMSR}). First, (\ref{eqn:MSR4}) comes for free as $\mu \in H^1(\Omega)$, and the trace operator is well defined on $\Sigma$ with no jump. Let $\Psi \in C^1(\Omega;\R^N)$ such that $\Psi \cdot n_{\p \Omega} = 0$ on $\partial \Omega$. Then we may decompose $\Psi$ into normal and tangential components along $\Sigma$ and use an integration by parts for the tangential vector field to find
\begin{equation}\label{eqn:innerVar}
\delta I_0[c](\Psi) = \sigma\int_{\Sigma} (\mathbb{I} - n_{\Sigma}\otimes n_{\Sigma}):\nabla \Psi \, d\mathcal{H}^{N-1}= -\sigma\left(\int_\Sigma  H_{\Sigma }\cdot \Psi\, d\mathcal{H}^{N-1} + \int_{\partial \Omega \cap \partial \Sigma} \Psi \cdot \tau \, d \mathcal{H}^{N-2} \right),
\end{equation} 
where $\delta I_0[c](\Psi)$ is a variation of the domain in the direction of $\Psi$ (see, e.g., \cite{MaggiBook}) and $\tau$ is the inward-pointing co-normal vector field on the boundary $\partial \Sigma$ within the tangent bundle of $\Sigma$ and we have suppressed dependence on $t$.
From the Gibbs--Thomson relation (\ref{def:GTrelationBV}) and using an integration by parts on $\mathcal{S}$ with $\Psi \cdot n_{\partial \Omega} = 0$, we may also compute that 
\begin{equation}\label{eqn:IBPGT}
\delta I_0[c](\Psi) = \int_{\Omega} {\rm div}(\mu \Psi)c  \, dx =  - \int_{\Sigma}\mu \Psi \cdot n_\Sigma \, d\mathcal{H}^{N-1}.
\end{equation}
Using compactly supported $\Psi$ we find that $\mu n_\Sigma= \sigma H_{\Sigma}$ on $\Sigma,$ concluding (\ref{eqn:MSR3}).

Combining (\ref{eqn:innerVar}) and (\ref{eqn:IBPGT}), we have $$ \int_{\partial \Omega \cap \partial \Sigma} \Psi \cdot \tau \, d \mathcal{H}^{N-2} = 0.$$
Choosing $\Psi$ as a local extension of the tangent vector $\tau_{\partial \Omega}(x_0) = \tau_{\partial \Omega}$ of $\partial \Omega$ near $x_0 \in \partial \Omega \cap \partial \Sigma,$ we see that $\tau \cdot \tau_{\partial \Omega} = 0$ on $\partial \Omega \cap \partial \Sigma$ for all $\tau_{\partial \Omega}$ tangent to $\partial \Omega.$ Consequently, $\tau  = n_{\partial \Omega},$ and we have (\ref{eqn:MSR6}).

We now turn to the higher order terms in the MSR model. As (\ref{eqn:MSR6}) is satisfied, the velocity $V_{\Sigma}$ can be extended to a function $\tilde V \in C^2(\Omega\times (0,T);\R^N)$ with $\tilde V \cdot n_{\partial \Omega} = 0.$ Consequently, we can compute the time derivative of the energy using the inner variation with respect to the diffeomorphism generated by $- \tilde V$ as
\begin{equation}\label{eqn:timeDerivativeSmooth}
\frac{d}{dt}I_0[c] = \sigma\int_{\Sigma} (\mathbb{I} - n_{\Sigma}\otimes n_\Sigma ):\nabla \tilde V \, d \mathcal{H}^{N-1} = \int_{\Omega}{\rm div}(\mu \tilde V) c \, dx= \la \partial_t c, \mu \ra_{H^{1}(\Omega)^*,H^1(\Omega)},
\end{equation}
where the second equality uses the Gibbs--Thomson relation (\ref{def:GTrelationBV}) and the last inequality follows from an integration by parts and (recall $c$ is a characteristic function)
\begin{equation}\label{eqn:surfaceFlow}
\partial_t c  =-\tilde V \cdot D c =  -\tilde V\cdot n_{\Sigma} \mathcal{H}^{N-1}|_{\Sigma}.
\end{equation}
We may localize the dissipation relation (\ref{def:MSR-dissipation}) for every $t$ in $(0,T)$ to find
\begin{equation}\label{eqn:localDissipation}
\frac{d}{dt}I_0[c] + \mathcal{A}_c(\mu) + \mathcal{A}^*_c(-\partial_t c)\leq 0.
\end{equation}
Using (\ref{eqn:timeDerivativeSmooth}) and (\ref{eqn:localDissipation}), we have equality in the Legendre--Fenchel inequality (\ref{eqn:legendreFenchelIneq}), and we conclude that
$$-\partial_t c \in \partial \mathcal{A}_c(\mu).$$ By (\ref{eqn:subdiffToBrelation}), this can be rewritten as
\begin{equation}\label{eqn:timeDerivStrongForm}
\la -\partial_t c, \xi \ra_{H^1(\Omega)^*,H^1(\Omega)} = \int_\Omega \nabla \mu \cdot \nabla \xi \, dx - \int_{\partial \Omega} R(c,\mu) \xi \, d\mathcal{H}^{N-1}
\end{equation}
for all $\xi \in H^1(\Omega).$
Testing (\ref{eqn:timeDerivStrongForm}) with $\xi \in C^1(\Omega)$ compactly supported away from $\Sigma,$ we recover (\ref{eqn:MSR1}) and (\ref{eqn:MSR5}).

For $\xi\in C^1(\Omega)$ compactly supported away from the boundary $\partial \Omega$, we use (\ref{eqn:surfaceFlow}) and an integration by parts to find
$$\int_\Sigma V_\Sigma \cdot n_\Sigma \xi \, d\mathcal{H}^{N-1} = - \int_\Sigma \jump{\partial_{n_{\Sigma}}\mu}\xi \, d \mathcal{H}^{N-1},$$
and localizing, we recover (\ref{eqn:MSR2}).
\end{proof}

We will prove existence of weak solutions to the MSR model (\ref{eqn:strongMSR}) by considering limit points of the solutions $c_\epsilon$ as $\epsilon \to 0$. 
Necessarily, our next result shows that solutions of the CHR$_\epsilon$ model are pre-compact in an appropriate topology. To show that the limiting function is a solution of the sharp interface model, we follow the general approach of Luckhaus and Sturzenhecker \cite{LucStu}, which requires the assumption of energy convergence. 
Developing this hypothesis, we recall the result of Modica \cite{Modica87} (see also \cite{ModicaMortola}), which shows that for a Lipschitz domain $\Omega\subset \R^N,$ the energy $I_\epsilon$ in (\ref{def:energy})
$\Gamma$-converges (see \cite{BraidesLocalMinNotes,DalMasoBook}) to the weighted perimeter functional 
\begin{equation}\label{def:energyPer}
I_0[c] : = \int_\Omega \sigma d |Dc| = \sigma{\rm Per} (\{c=1\}; \Omega), \quad \text{where }c \in BV( \Omega ; \{0,1\}) \text{ and }\sigma := \int_{0}^1 \sqrt{2f(s)}\, ds.
\end{equation}
The energy convergence assumption requires that
\begin{equation}\label{eqn:energyConvergence}
\int_0^T I_\epsilon[c_\epsilon(t)]\, dt \to \int_0^T I_0[c(t)]\, dt,
\end{equation}
where $c_\epsilon \to c$ as in Theorem \ref{thm:compactnessCHReps} below. This assumption implies that the sequence of solutions has an equipartition of energy in the limit, or more concretely, the $\liminf$ inequality following from $\Gamma$-convergence of $I_\epsilon \wkto_{\Gamma} I_0$ is in fact a limit at each point in time. Recently, such energy convergence hypotheses have been used to resolve questions in the context of (multiphase) mean curvature flow \cite{lauxSimon2018}, the Cahn--Hilliard equation with degenerate mobility~\cite{kroemerLaux2021}, and  density-constrained chemotaxis \cite{kimMelletWu2022}.

\begin{thm}\label{thm:compactnessCHReps}
Let $\Omega \subset \R^N$ be a bounded, open set with Lipschitz boundary. Assume hypotheses (\ref{ass:f}) to (\ref{ass:R3}) hold. For $T>0$, let $c_\epsilon$ be solutions of the CHR$_\epsilon$ model (\ref{pde:CHRnoElastic}) in the sense of Definition \ref{def:CHRweakSoln} such that $\limsup_{\epsilon\to 0}I_\epsilon[c_\epsilon(0)]<\infty$. Then there exists $$c \in L^\infty(0,T;BV(\Omega; \{0,1\}))\cap W^{1,(2^\# -\delta)'}(0,T;H^{1}(\Omega)^*),$$ such that up to a subsequence, the functions $c_\epsilon$ converge to $c$ in $L^2(0,T;L^{(2^\# - \delta)'}(\Omega))$. Furthermore, if $c_\epsilon$ satisfy the energy convergence relation (\ref{eqn:energyConvergence}), the traces $c_\epsilon \in L^\infty(0,T;L^{(2^\# - \delta)'}(\partial \Omega))$ converge in $L^2(0,T;L^{(2^\# - \delta)'}(\partial\Omega))$ to the trace of $c.$ Furthermore, the chemical potentials $\mu_\epsilon$ converge to $\mu$ weakly in $L^2(0,T;H^1(\Omega)).$
\end{thm}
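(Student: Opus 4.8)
The plan is to run a Modica--Mortola compactness argument inside the gradient-flow framework: derive uniform a priori bounds from the dissipation inequality (\ref{def:CHR-dissipation}), feed them into a (nonlinear) Aubin--Lions--Simon compactness result to get strong bulk convergence, and finally use the energy-convergence hypothesis (\ref{eqn:energyConvergence}) to upgrade the boundary trace convergence.

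First I would extract the basic estimates. Since $\mathcal{A}^*_c(v^*)\ge-\mathcal{A}_c(0)=\int_{\partial\Omega}G(c,0)\,d\mathcal{H}^{N-1}=0$ (as $G(s,0)=0$), and since by the coercivity (\ref{ass:G2}) together with a trace--Poincar\'e inequality $\mathcal{A}_c(\mu)\ge\frac1C\|\mu\|^2_{H^1(\Omega)}-C$, the inequality (\ref{def:CHR-dissipation}) immediately yields $\sup_t I_\epsilon[c_\epsilon(t)]\le C$ and $\|\mu_\epsilon\|_{L^2(0,T;H^1(\Omega))}\le C$ uniformly in $\epsilon$, which already gives the asserted weak-$L^2(0,T;H^1(\Omega))$ precompactness of $\mu_\epsilon$. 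Setting $\Phi(s):=\int_0^s\sqrt{2f(r)}\,dr$, Young's inequality gives $|D\Phi(c_\epsilon(t))|(\Omega)\le I_\epsilon[c_\epsilon(t)]\le C$ and the coercivity (\ref{ass:f}) bounds $\Phi(c_\epsilon(t))$ in $L^1(\Omega)$, so $\Phi(c_\epsilon)$ is bounded in $L^\infty(0,T;BV(\Omega))$. The exponent in (\ref{ass:f}) is tuned precisely so that $|\Phi(s)|\gtrsim|s|^{\max\{3/2,(2^\#-\delta)'\}}-C$, whence the $BV$-trace theorem produces the uniform bound $\|c_\epsilon\|_{L^\infty(0,T;L^{(2^\#-\delta)'}(\partial\Omega))}\le C$. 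With this trace bound in hand, Lemma \ref{lem:H1dualbdd} applied to the dissipation term upgrades to $\|\partial_t c_\epsilon\|_{L^{(2^\#-\delta)'}(0,T;H^1(\Omega)^*)}\le C$.

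Next I would obtain strong bulk compactness in analogy with Step 3 of Theorem \ref{thm:CHRSolnexist}, replacing the (now non-uniform) $H^1(\Omega)$ bound by the nonlinear $BV$ bound on $\Phi(c_\epsilon)$. Temporal equicontinuity for $c_\epsilon$ in $H^1(\Omega)^*$, i.e.\ (\ref{eqn:almostALShyp}), follows from the $\partial_t$-bound via Jensen; spatial compactness comes from $BV(\Omega)\hookrightarrow\hookrightarrow L^1(\Omega)$ applied to $\Phi(c_\epsilon)$ together with continuity of $\Phi^{-1}$; and the extra higher integrability supplied by $BV(\Omega)\hookrightarrow L^{N/(N-1)}(\Omega)$ makes $\{|c_\epsilon|^{(2^\#-\delta)'}\}$ equi-integrable in the bulk. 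A nonlinear Aubin--Lions--Simon argument (in the spirit of \cite{simonCompact}, carried out as in \cite[Lemma 4.2.7]{stinson-phd} and \cite[Theorem 1]{henselStinson-weakSolnMS}) then gives, along a subsequence, $c_\epsilon\to c$ almost everywhere and in $L^2(0,T;L^{(2^\#-\delta)'}(\Omega))$. Since $\int_0^T\!\int_\Omega f(c_\epsilon)\,dx\,dt\le C\epsilon\to0$, the limit obeys $c\in\{0,1\}$ a.e.; the Modica $\liminf$ inequality \cite{Modica87} yields $\sup_t I_0[c(t)]\le C$, hence $c\in L^\infty(0,T;BV(\Omega;\{0,1\}))$, while $\partial_t c$ is identified as the weak limit of $\partial_t c_\epsilon$ in $L^{(2^\#-\delta)'}(0,T;H^1(\Omega)^*)$.

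The delicate point, and the only place the energy-convergence hypothesis (\ref{eqn:energyConvergence}) enters, is the boundary trace convergence, because $BV$-traces are discontinuous under mere weak-$*$ convergence. Here I would argue that (\ref{eqn:energyConvergence}) together with the pointwise $\liminf$ inequality forces $I_\epsilon[c_\epsilon(t)]\to I_0[c(t)]$ for a.e.\ $t$; combined with $|D\Phi(c_\epsilon(t))|(\Omega)\le I_\epsilon[c_\epsilon(t)]$ and lower semicontinuity this gives $|D\Phi(c_\epsilon(t))|(\Omega)\to|D\Phi(c(t))|(\Omega)=I_0[c(t)]$, i.e.\ strict convergence $\Phi(c_\epsilon(t))\to\sigma c(t)$ in $BV(\Omega)$ for a.e.\ $t$. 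Strict convergence yields trace convergence $\Phi(c_\epsilon)\to\sigma c$ in $L^1(\partial\Omega)$, hence $c_\epsilon\to c$ a.e.\ on $\partial\Omega$ with $\{0,1\}$-valued boundary limit; crucially, $L^1$-convergence makes $\{\Phi(c_\epsilon)\}$ equi-integrable on $\partial\Omega$, and the coercivity $|s|^{(2^\#-\delta)'}\le C|\Phi(s)|+C$ transfers this to equi-integrability of $\{|c_\epsilon|^{(2^\#-\delta)'}\}$, so Vitali's theorem improves the a.e.\ convergence to $L^{(2^\#-\delta)'}(\partial\Omega)$ convergence for a.e.\ $t$; dominated convergence in time, using the uniform $L^\infty$-in-time trace bound, then produces convergence in $L^2(0,T;L^{(2^\#-\delta)'}(\partial\Omega))$. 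I expect this endpoint trace step---reconciling the nonlinear change of variables $\Phi$, strict $BV$ convergence, and the sharp, non-improvable boundary exponent---to be the main obstacle, the bulk compactness being comparatively routine once the nonlinear Aubin--Lions machinery is in place.
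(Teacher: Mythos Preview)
Your proposal is essentially the paper's proof: Modica--Mortola change of variables $\Phi$, uniform bounds from the dissipation inequality (including the crucial trace bound $\|c_\epsilon\|_{L^\infty(0,T;L^{(2^\#-\delta)'}(\partial\Omega))}\le C$ via the $BV$ trace), a nonlinear Aubin--Lions compactness (the paper invokes the Rossi--Savar\'e version \cite{rossiSavare2003} via a functional with compact sublevel sets in $L^{(2^\#-\delta)'}(\Omega)$), and strict $BV$ convergence under the energy-convergence hypothesis to recover boundary traces.

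One step to tighten: from $\int_0^T I_\epsilon[c_\epsilon]\,dt\to\int_0^T I_0[c]\,dt$ together with the pointwise $\liminf$ inequality you \emph{cannot} conclude directly that $I_\epsilon[c_\epsilon(t)]\to I_0[c(t)]$ for a.e.\ $t$ (a typewriter sequence gives a counterexample). What you do get is $I_\epsilon[c_\epsilon(\cdot)]\to I_0[c(\cdot)]$ in $L^1(0,T)$: since $\liminf_\epsilon I_\epsilon[c_\epsilon(t)]\ge I_0[c(t)]$, the negative parts $(I_\epsilon[c_\epsilon(t)]-I_0[c(t)])^-\to 0$ a.e.\ and are dominated by $I_0[c(t)]\in L^1$, so their integral vanishes; combined with convergence of the full integral this forces $L^1$ convergence. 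Then a further subsequence gives the a.e.-in-$t$ convergence you need, and your pointwise-in-time strict-$BV$/Vitali argument goes through. The paper avoids this detour by proving strict convergence directly in the time-integrated form $\int_0^T|D\Phi(c_\epsilon)|(\Omega)\,dt\to\int_0^T|D(\sigma c)|(\Omega)\,dt$ and applying the $BV$ trace continuity theorem \cite[Theorem~3.88]{AmbrosioFuscoPallara} in that integrated sense; your per-time-slice route is a valid alternative once the subsequence issue is made explicit.
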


\begin{thm}\label{thm:BVsolnExist}
Let $\Omega \subset \R^N$ be a bounded, open set with $C^2$ boundary, $T>0$, and $c_0 \in BV(\Omega ; \{0,1\})$. Assume hypotheses (\ref{ass:f}) to (\ref{ass:R3}) hold. Let $c_\epsilon$ converge to $c$ as in Theorem \ref{thm:compactnessCHReps} and satisfy the energy convergence relation (\ref{eqn:energyConvergence}) with well prepared initial data, in the sense that 
\begin{equation}\label{eqn:initWellPrep}
c_\epsilon (0) \to c_0\text{ in }L^1(\Omega)\quad \text{ and } \quad \lim_{\epsilon\to 0}I_\epsilon[c_\epsilon(0)] = I_0[c_0].
\end{equation} Then $c$ is a BV-solution of the MSR equation in the sense of Definition \ref{def:BVsolnMSR}.
\end{thm}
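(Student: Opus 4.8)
The regularity demanded of $c$ and the existence of the weak limit $\mu \in L^2(0,T;H^1(\Omega))$ of the chemical potentials are supplied directly by Theorem \ref{thm:compactnessCHReps}: it yields $c \in L^\infty(0,T;BV(\Omega;\{0,1\})) \cap W^{1,(2^\#-\delta)'}(0,T;H^1(\Omega)^*)$, the latter embedding into $C([0,T);H^1(\Omega)^*)$, together with the convergences $c_\epsilon \to c$ in $L^2(0,T;L^{(2^\#-\delta)'}(\Omega))$, ${\rm Tr}\,c_\epsilon \to {\rm Tr}\,c$ in $L^2(0,T;L^{(2^\#-\delta)'}(\p\Omega))$, and $\mu_\epsilon \wkto \mu$ in $L^2(0,T;H^1(\Omega))$. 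The well-prepared hypothesis (\ref{eqn:initWellPrep}) identifies $c(0)=c_0$ by passing $c_\epsilon(0)\to c_0$ to the limit in $H^1(\Omega)^*$. Thus the substance of the theorem is to produce the Gibbs--Thomson relation (\ref{def:GTrelationBV}) and the optimal dissipation inequality (\ref{def:MSR-dissipation}) in the limit.

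For the Gibbs--Thomson relation the plan is to first derive its diffuse analogue. Because $\Omega$ is $C^2$, (\ref{def:muCHR}) is for a.e.\ $t$ a Neumann problem $-\epsilon\Delta c_\epsilon = \mu_\epsilon - \tfrac1\epsilon f'(c_\epsilon)$ whose right-hand side lies in $L^2(\Omega)$ (by the growth (\ref{ass:f}) of $f'$ and $H^1\hookrightarrow L^{2^*}$), so elliptic regularity gives $c_\epsilon \in L^2(0,T;H^2(\Omega))$ and $\xi := \nab c_\epsilon \cdot \Psi \in H^1(\Omega)$ is admissible in (\ref{def:muCHR}) for every $\Psi \in C^1(\Omega;\R^N)$ with $\Psi\cdot n_{\p\Omega}=0$. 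Inserting $\xi$ and integrating by parts (all boundary terms vanishing, using $\Psi\cdot n_{\p\Omega}=0$ and $\p_n c_\epsilon=0$) gives the stress-tensor identity
\[
\int_\Omega c_\epsilon\,{\rm div}(\mu_\epsilon\Psi)\,dx = \int_\Omega\Big[\Big(\tfrac1\epsilon f(c_\epsilon)+\tfrac\epsilon2\|\nab c_\epsilon\|^2\Big){\rm div}\,\Psi - \epsilon\,\nab c_\epsilon\otimes\nab c_\epsilon : \nab\Psi\Big]dx
\]
for a.e.\ $t$. I would then multiply by an arbitrary $\eta\in C^\infty_c((0,T))$, integrate in time, and let $\epsilon\to0$. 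The left side passes to $\int_0^T\eta\int_\Omega c\,{\rm div}(\mu\Psi)$ by strong convergence of $c_\epsilon$ against weak convergence of ${\rm div}(\mu_\epsilon\Psi)$ in $L^2(0,T;L^2(\Omega))$ (in the representative case $2^\#-\delta=2$ the convergence of Theorem \ref{thm:compactnessCHReps} is already strong in $L^2(0,T;L^2(\Omega))$; the general superlinear case relies on the improved integrability afforded by (\ref{ass:f})).

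The passage to the limit on the right-hand side is where the energy-convergence hypothesis (\ref{eqn:energyConvergence}) is indispensable, and I expect it to be the main obstacle. Writing $e_\epsilon := \tfrac1\epsilon f(c_\epsilon)+\tfrac\epsilon2\|\nab c_\epsilon\|^2$ and the discrepancy $\zeta_\epsilon := \tfrac\epsilon2\|\nab c_\epsilon\|^2 - \tfrac1\epsilon f(c_\epsilon)$, the Modica--Mortola inequality yields the $\Gamma$-$\liminf$ bound $\int_0^T I_0[c]\,dt \le \int_0^T \liminf_\epsilon I_\epsilon[c_\epsilon]\,dt$, which against (\ref{eqn:energyConvergence}) forces, via Fatou, both $I_\epsilon[c_\epsilon(t)]\to I_0[c(t)]$ for a.e.\ $t$ and the vanishing of the space-time discrepancy $\int_0^T\int_\Omega|\zeta_\epsilon|\,dx\,dt\to0$. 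Decomposing the energy-momentum tensor as $e_\epsilon(\mathbb I - \nu_\epsilon\otimes\nu_\epsilon) - \zeta_\epsilon\,\nu_\epsilon\otimes\nu_\epsilon$ with $\nu_\epsilon := \nab c_\epsilon/\|\nab c_\epsilon\|$, the discrepancy term drops out in $L^1$, while equipartition of energy (a Luckhaus--Modica / Reshetnyak argument using the strict convergence of $\Phi(c_\epsilon):=\int_0^{c_\epsilon}\sqrt{2f(s)}\,ds$ to $\sigma c$ in $BV$) identifies the limiting measure and aligns the diffuse normals with $\nu := Dc/|Dc|$, giving
\[
\int_0^T\eta\int_\Omega\Big[e_\epsilon\,{\rm div}\,\Psi - \epsilon\,\nab c_\epsilon\otimes\nab c_\epsilon:\nab\Psi\Big]dx\,dt \to \int_0^T\eta\,\sigma\int_\Omega\big(\mathbb I - \nu\otimes\nu\big):\nab\Psi\,d|Dc|\,dt.
\]
Since $\eta$ is arbitrary, equating the two limits yields (\ref{def:GTrelationBV}) for a.e.\ $t$.

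Finally, for (\ref{def:MSR-dissipation}) I would pass to the limit in the diffuse dissipation (\ref{def:CHR-dissipation}) term by term. On the right, $I_\epsilon[c_\epsilon(0)]\to I_0[c_0]$ by (\ref{eqn:initWellPrep}); on the left, the $\Gamma$-$\liminf$ inequality gives $I_0[c(T_*)]\le\liminf_\epsilon I_\epsilon[c_\epsilon(T_*)]$ for a.e.\ $T_*$ (a common subsequence being chosen so that $c_\epsilon(T_*)\to c(T_*)$ in $L^1(\Omega)$). It then remains to prove the lower semicontinuity
\[
\int_0^{T_*}\A_c(\mu)+\A^*_c(-\p_t c)\,dt \le \liminf_{\epsilon\to0}\int_0^{T_*}\A_{c_\epsilon}(\mu_\epsilon)+\A^*_{c_\epsilon}(-\p_t c_\epsilon)\,dt,
\]
using $\mu_\epsilon\wkto\mu$ in $L^2(0,T;H^1(\Omega))$, $\p_t c_\epsilon\wkto\p_t c$ in $L^{(2^\#-\delta)'}(0,T;H^1(\Omega)^*)$, and the strong trace convergence of $c_\epsilon$. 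Each functional is convex in its second argument and depends on $c$ only through a boundary integrand governed by $G$; for the $\A$-term the Dirichlet part is weakly lower semicontinuous and $-\int_{\p\Omega}G(c_\epsilon,\mu_\epsilon)$ is treated by an Ioffe-type theorem (convex integrand in the weakly converging $\mu$, strongly converging parameter $c_\epsilon$, with coercivity (\ref{ass:G2})). For the $\A^*$-term I would invoke the dual representation: for each fixed $v\in L^{2^\#-\delta}(0,T_*;H^1(\Omega))$,
\[
\int_0^{T_*}\A^*_{c_\epsilon}(-\p_t c_\epsilon)\,dt \ge \int_0^{T_*}\la -\p_t c_\epsilon, v\ra\,dt - \int_0^{T_*}\A_{c_\epsilon}(v)\,dt,
\]
whose right-hand side converges (weak--strong pairing, together with $\A_{c_\epsilon}(v)\to\A_c(v)$ by the strong trace convergence and the growth (\ref{ass:G1})); taking the supremum over $v$ and interchanging it with the time integral by measurable selection recovers $\int_0^{T_*}\A^*_c(-\p_t c)\,dt$. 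Summing the three estimates and letting the endpoint approach $T_*$ yields (\ref{def:MSR-dissipation}) and completes the argument.
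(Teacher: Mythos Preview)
Your proposal is correct and follows the same two-part architecture as the paper: derive the diffuse stress-tensor identity \`a la Luckhaus--Modica, pass to the limit using the energy-convergence hypothesis, and then obtain the optimal dissipation by lower semicontinuity of each term. The differences are purely in the auxiliary tools. For the boundary term in $\mathcal{A}_c(\mu)$ you invoke an Ioffe-type theorem, while the paper generates a Young measure for $(\mu_\epsilon,c_\epsilon)$ on $\partial\Omega\times(0,T)$ and applies Jensen's inequality; both exploit convexity of $-G$ in the weak variable against strong convergence of the parameter. For $\mathcal{A}^*_c(-\partial_t c)$ you use the dual representation plus a measurable-selection interchange of $\sup$ and $\int$; the paper implements this concretely via a countable dense set $\{v_i\}\subset H^1\cap C(\bar\Omega)$, finite maxima, and monotone convergence, which sidesteps the selection argument. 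For the tensor term $\epsilon\,\nabla c_\epsilon\otimes\nabla c_\epsilon$ you appeal to a Reshetnyak continuity argument under strict convergence of $\phi\circ c_\epsilon$; the paper instead freezes one normal factor with an auxiliary continuous field $n^*$ and invokes a lemma from \cite{kroemerLaux2021} to control $\int|n_\epsilon-n^*|^2\,d|D(\phi\circ c_\epsilon)|$ in the time-integrated setting. Your Reshetnyak invocation is morally right but would need care in the space-time version, since standard Reshetnyak is stated for $BV(\Omega)$ at fixed $t$; the paper's $n^*$ device handles this directly. None of these variations affects the substance of the proof.
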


\begin{proof}[Proof of Theorem \ref{thm:compactnessCHReps}]

To deduce compactness in the bulk, we adapt an approach used in the context of Wasserstein gradient flows by Kroemer and the first author in \cite{kroemerLaux2021}. As in $\Gamma$-convergence for the Modica--Mortola energy, we will introduce the function $\phi$ associated with the geodesic distance for the two-well function. Then we will take advantage of the dissipative nature of the evolutionary process to gain compactness in time and space.

Recalling the growth in (\ref{ass:f}) and choosing $K > \max\{f(s):s\in (0,1)\}$, we introduce the function $\phi:\R\to \R$ as
\begin{equation}\label{def:phiInterfaceEnergy}
\phi(s) : = \int_{0}^{s} \sqrt{2\min\left\{f(t),\frac{1}{1+C}|s|^{\max\{1,2[(2^\# - \delta)'-1]\}}  + K\right\}}\, dt.
\end{equation}
Note $K$ is chosen to ensure that $\phi(1) = \sigma$ as in (\ref{def:energyPer}).

For $0<B<\infty$, define the energy $\mathcal{F}_{B}:L^{(2^\# -\delta)'}(\Omega)\to \R\cup \{\infty\}$ by
\begin{equation}\nonumber
\mathcal{F}_{B}[u]:=
\begin{cases}
\int_{\Omega} d|D (\phi \circ u)| & \text{ if }\left|\intbar_\Omega \phi \circ u \, dx \right|\leq B, \\
+\infty & \text{ otherwise.}
\end{cases} 
\end{equation}
Momentarily, we will apply a compactness theorem from \cite{rossiSavare2003} to the functional $\mathcal{F}_B$ acting on $c_\epsilon$. To apply the theorem, we must first show that $\mathcal{F}_B$ is 
\begin{enumerate}[label=\roman*)]
\item\label{item:1} measurable, 

\item lower semi-continuous,  

\item\label{item:3}  and has compact sublevel sets on $L^{(2^\# - \delta)'}(\Omega)$.
\end{enumerate}

As the constraints introduced in $\mathcal{F}_B$ are measurable, the functional is also measurable. For lower semi-continuity, if $u_i \to u$ in $L^{(2^\# - \delta)'}(\Omega)$ and $ \liminf_{i\to \infty} \mathcal{F}_B[u_i]<\infty$, then up to a subsequence, we have $ \lim_{i\to \infty} \mathcal{F}_B[u_i] = \liminf_{i\to \infty} \mathcal{F}_B[u_i]$. By the Poincar\'e inequality, compactness for $BV(\Omega),$ and pointwise convergence of $u_i \to u$, we have $\phi \circ u_i \to \phi \circ u$ in $L^1(\Omega)$. Lower semi-continuity of the total variation and continuity of the mass average under aforementioned convergence show that $\mathcal{F}_B$ is lower semi-continuous.  Similarly, the sublevel sets of $\mathcal{F}_B$ given by
$$S_{B,C} : = \{u\in L^{(2^\# - \delta)'}(\Omega):\mathcal{F}_B \leq C\} $$
are compact in $L^{(2^\# -\delta)'}(\Omega)$. Precisely, if $\mathcal{F}_B[u_i]\leq C$ for some collection of functions $\{u_i\}_{i\in \N} \subset L^{(2^\# - \delta)'}(\Omega),$ then as before, $\phi\circ u_i$ are uniformly bounded in $BV(\Omega)$ and have a subsequence converging in $L^1(\Omega)$ to $w \in BV(\Omega).$ As $\phi$ has $p$-growth with $p = (2^\# -\delta)'$ and continuous inverse, the Lebesgue dominated convergence theorem implies that $u_i \to u: = \phi^{-1}\circ w$ in $L^{(2^\# - \delta)'}(\Omega),$ as desired. This concludes the proof of properties \ref{item:1}-\ref{item:3}.

Turning to the collection of functions $c_\epsilon,$ we must show a tightness condition and uniform regularity in time. Note that from the growth condition (\ref{ass:f}), $$\int_\Omega |\phi\circ c_\epsilon| \, dx \leq \int_\Omega f(c_\epsilon)\, dx + C$$ for some fixed constant.
Consequently, given the dissipation relation (\ref{def:CHR-dissipation}), Young's inequality, and choosing $B\gg \sup_{\epsilon > 0} I_\epsilon [c_\epsilon(0)]$, we have that
\begin{equation}\label{eqn:RS03hyp1}
\int_0^T \mathcal{F}_{B}[c_\epsilon(t)]\, dt\leq T \sup_{t\in (0,T)}\int_\Omega \sqrt{2f(c_\epsilon)}\|\nabla c_\epsilon\|\, dx \leq T \sup_{t\in (0,T)}\int_\Omega \frac{1}{\epsilon}f(c_\epsilon) +  \frac{\epsilon}{2} \|\nabla c_\epsilon\|^2\, dx\leq TB.
\end{equation}

To gain uniform regularity in time, first note that by the coercivity of $G$ (\ref{ass:G2}) and a Poincar\'e inequality with trace, we have
\begin{equation}\label{eqn:muepsBdd}
\begin{aligned}
\mathcal{A}_c(\mu) \geq &\frac{1}{2}\int_\Omega \|\nabla \mu\|^2 \, dx+\frac{1}{C}\int_{\partial \Omega} |\mu|^2\, \mathcal{H}^{N-1} -C \geq  \frac{1}{C}\|\mu\|^2_{H^1(\Omega)}-C,
\end{aligned}
\end{equation}
from which we find $\int_0^T \mathcal{A}_c(\mu) \, dt >-\infty$.
Further, from the dissipation relation (\ref{def:CHR-dissipation}) and Young's inequality, we have $\|\phi \circ c_\epsilon\|_{L^\infty(0,T;BV(\Omega))} \leq C<\infty $ for some $C>0$ and all $\epsilon>0.$ Consequently, using continuity of the trace into $L^1(\partial \Omega)$ for functions in $BV(\Omega),$ we have $\|\phi \circ c_\epsilon\|_{L^\infty(0,T;L^1(\partial \Omega))} \leq C$, which by the growth condition (\ref{ass:f}) and definition (\ref{def:phiInterfaceEnergy}) further implies $\|c_\epsilon\|_{L^\infty(0,T;L^{(2^\# - \delta)'}(\partial \Omega))} \leq C =:\alpha$.
Applying Lemma \ref{lem:H1dualbdd} to (\ref{def:CHR-dissipation}), we recover a uniform (for $\epsilon>0$) bound on the time-derivative of $c_\epsilon$ given by 
\begin{equation}\label{eqn:unifTimeDerivBdd}
\|\partial_t c_\epsilon\|_{L^{(2^\#-\delta)'}(0,T;H^1(\Omega)^*)}\leq C<\infty.
\end{equation}

Finally, with (\ref{eqn:RS03hyp1}), (\ref{eqn:unifTimeDerivBdd}), and properties \ref{item:1}-\ref{item:3} of the functional $\mathcal{F}_B$, the hypotheses of \cite[Theorem 2]{rossiSavare2003}, a variant of the Aubin--Lions--Simon compactness theorem, are satisfied, and up to a subsequence, $c_\epsilon$ converge in measure to a function $c$, meaning,
\begin{equation}\nonumber
\limsup_{\epsilon\to 0} \mathcal{L}^1(\{t: \|c_\epsilon(t)-c(t)\|_{L^{(2^\# - \delta)'}(\Omega)}>\eta\}) = 0\quad \text{ for all }\eta>0.
\end{equation}
Up to a further subsequence, we may assume that $c_\epsilon(t) \to c(t)$ in $L^{(2^\# - \delta)'}(\Omega)$ for almost every $t\in (0,T),$ which by Lebesgue's dominated convergence theorem, (\ref{def:CHR-dissipation}), and (\ref{ass:f}) gives $c_\epsilon\to c$ in $L^2(0,T;L^{(2^\# - \delta)'}(\Omega))$. From the dissipation relation (\ref{def:CHR-dissipation}), for almost every $t$ in $(0,T),$ $c$ must belong to $L^1(\Omega ; \{0,1\})$. From the lower semi-continuity of $\mathcal{F}_B$ and the lower semi-continuity of the norm for the time derivative, we conclude that $c\in L^\infty(0,T;BV(\Omega;\{0,1\}))\cap W^{1,(2^\#-\delta)'}(0,T;H^1(\Omega)^*).$

To obtain convergence of the traces, define the functions $w_\epsilon := \phi \circ c_\epsilon$ which, by compactness of $c_\epsilon$, converge to $w:=\phi\circ c = \sigma c$ in $L^2(0,T;L^1(\Omega))$. By lower semi-continuity of the total variation and Fatou's lemma, we have
\begin{equation}\nonumber
 \int_0^T I_0(c)\, dt  = \int_0^T \int_\Omega d|D w|\, dt \leq \liminf_{\epsilon\to 0}\int_0^T \int_\Omega d|D w_\epsilon|\, dt.
\end{equation} 
By the energy convergence (\ref{eqn:energyConvergence}) and Young's inequality, we obtain the matching upper bound
\begin{equation}\nonumber
\limsup_{\epsilon\to 0}\int_0^T \int_\Omega d|D w_\epsilon|\, dt \leq \limsup_{\epsilon\to 0} \int_0^T I_\epsilon[c_\epsilon]\, dt = \int_0^T I_0(c)\, dt.
\end{equation}
The above equations show that $w_\epsilon$ converges to $w$ \textit{strictly} in $BV(\Omega)$, where here by strict convergence it is meant
\begin{equation}\label{eqn:strictConv}
w_\epsilon \to w \text{ in }L^1(0,T;L^1(\Omega)) \text{ and }\int_0^T \int_\Omega d|D w_\epsilon|\, dt \to \int_0^T \int_\Omega d|D w|\, dt.
\end{equation} 
By \cite[Theorem 3.88]{AmbrosioFuscoPallara}, the trace map from $BV(\Omega)\to L^1(\partial\Omega)$ is continuous if $BV(\Omega)$ is equipped with the topology from strict convergence, and up to minor modification of the proof, the result still holds under strict convergence in the sense of (\ref{eqn:strictConv}). Consequently, $w_\epsilon \to w$ in $L^1(0,T;L^1(\partial \Omega))$, but as (\ref{def:CHR-dissipation}) and the trace estimate $\|u\|_{L^1(\partial \Omega)}\leq \|u\|_{BV(\Omega)}$ imply $w_\epsilon$ are uniformly bounded in $L^\infty(0,T;L^1(\partial\Omega))$, convergence may be improved to $L^2(0,T;L^1(\partial\Omega)).$ As $\phi$ has continuous inverse and $p$-growth with $p\geq (2^\# - \delta)'$, we conclude the convergence of traces of $c_\epsilon$ as in the theorem statement.

Lastly, passing to a subsequence if necessary, $\mu_\epsilon$ weakly converges to $\mu$ in $L^2(0,T;H^1(\Omega))$ by (\ref{eqn:muepsBdd}).
\end{proof}

\begin{proof}[Proof of Theorem \ref{thm:BVsolnExist}]
\textbf{Step 1: Dissipation.}
We begin by obtaining the dissipation in the limit. First note that $\mu_\epsilon $ in $L^2(0,T;L^2(\partial \Omega))$ generates a Young measure $\alpha_{(x,t)}$ with $\int_\R y \, d\alpha_{(x,t)} = \mu(x,t)$ for $\mathcal{H}^N$-a.e. $(x,t)\in \partial\Omega \times (0,T)$. By Theorem \ref{thm:compactnessCHReps}, $c_\epsilon$ in $L^2(0,T;L^1(\partial\Omega))$ generates a Young measure $\gamma_{(x,t)} = \delta_{c(x,t)}.$ By the fundamental theorem for Young measures applied to $\gamma_{(x,t)} \otimes \alpha_{(x,t)}$ and convexity of $-G(c,\cdot)$ coupled with Jensen's inequality, we find 
\begin{equation}\nonumber
\begin{aligned}
\liminf_{\epsilon \to 0} \int_0^T  \int_{\partial \Omega} -G(c_\epsilon,\mu_\epsilon) \ d\mathcal{H}^{N-1}  dt & = \int_0^T  \int_{\partial \Omega} \int_{\R} -G(c,y) \, d\alpha_{(x,t)}(y)\, d\mathcal{H}^{N-1} dt \\
& \geq \int_0^T  \int_{\partial \Omega}  -G(c,\mu)  d\mathcal{H}^{N-1}  dt.
\end{aligned}
\end{equation}
The above relation, lower semi-continuity of the $L^2$ norm, and the definition (\ref{def:functA}) show that
\begin{equation}\label{eqn:ALSC}
\begin{aligned}
\liminf_{\epsilon \to 0} \int_0^T \mathcal{A}_{c_\epsilon}(\mu_{\epsilon})\, dt \geq \int_0^T \mathcal{A}_{c}(\mu)\, dt.
\end{aligned}
\end{equation}

To prove lower semi-continuity of the conjugate $\mathcal{A}^*$, we consider a dense subset $\{v_i\}_{i\in \N} \subset H^1(\Omega)\cap C(\bar \Omega).$ Then by (\ref{ass:G1}) and Theorem \ref{thm:compactnessCHReps}, $$\int_{0}^T \left| \mathcal{A}_{c_\epsilon}(v_i) -\mathcal{A}_{c}(v_i) \right| dt \to 0.$$ We also remark that $$v^* \mapsto \int_0^T \max_{i\leq k}\left\{ \la v^*, v_i \ra_{H^1(\Omega)^*,H^1(\Omega)}  - \mathcal{A}_{c}(v_i)\right\} dt$$ is a convex functional over $L^{(2^\#-\delta)'}(0,T;H^1(\Omega)^*)$. Putting these comments together and using lower semi-continuity of convex functionals under weak convergence, we have 
\begin{equation}\nonumber
\begin{aligned}
\liminf_{\epsilon \to 0} \int_0^T \mathcal{A}_{c_\epsilon}^*(-\partial_t c_{\epsilon})\, dt  \geq & \liminf_{\epsilon \to 0} \int_0^T \max_{i\leq k}\left\{ \la - \partial_t c_\epsilon, v_i \ra_{H^1(\Omega)^*,H^1(\Omega)}  - \mathcal{A}_{c_\epsilon}(v_i)\right\} dt \\
 \geq & \liminf_{\epsilon\to 0}\int_0^T \max_{i\leq k}\left\{ \la - \partial_t c_\epsilon, v_i \ra_{H^1(\Omega)^*,H^1(\Omega)}  - \mathcal{A}_{c}(v_i) \right\} dt \\
& - \limsup_{\epsilon\to 0} \int_0^T \max_{i\leq k}\{| \mathcal{A}_{c}(v_i) -  \mathcal{A}_{c_\epsilon}(v_i)|\} \, dt \\
\geq & \int_0^T \max_{i\leq k}\left\{ \la - \partial_t c, v_i \ra_{H^1(\Omega)^*,H^1(\Omega)}  - \mathcal{A}_{c}(v_i) \right\} dt.
\end{aligned}
\end{equation} 
Then we may apply the monotone convergence theorem as $k\to \infty$ to find
\begin{equation}\label{eqn:AstarLSC}
\begin{aligned}
\liminf_{\epsilon \to 0} \int_0^T \mathcal{A}_{c_\epsilon}^*(-\partial_t c_{\epsilon})\, dt & \geq  \int_0^T \mathcal{A}_{c}^*(-\partial_t c)\, dt.
\end{aligned}
\end{equation}

By convergence of $c_\epsilon(t)$ to $c(t)$ in $L^1(\Omega)$ for almost every $t$ in $(0,T)$ and the $\Gamma-\liminf$ inequality for the energy $I_\epsilon$, we have 
\begin{equation}\label{eqn:EnergyLSC}
I_0[c(t)] \leq \liminf_{\epsilon\to 0} I_\epsilon [c_\epsilon(t)]
\end{equation} 
for almost every $t$ in $(0,T)$. With the well-preparedness of the initial condition from (\ref{eqn:initWellPrep}), by (\ref{eqn:ALSC}), (\ref{eqn:AstarLSC}), and (\ref{eqn:EnergyLSC}), we conclude the dissipation relation (\ref{def:MSR-dissipation}).

\textbf{Step 2: Gibbs--Thomson relation.}
 To recover the Gibbs--Thomson relation (\ref{def:GTrelationBV}), we show that the higher-order nonlinearity arising from the metric interacts well with the convergence arising from compactness.
We begin by showing that an approximate version of the Gibbs--Thomson relation, involving the stress tensor, still holds in our setting; this follows a classic computation of Luckhaus and Modica \cite{LuckhausModica}. Precisely, we show
\begin{equation}\label{eqn:approxGT}
 \int_\Omega \left[\Big(\frac{\epsilon}{2} \|\nabla c_\epsilon\|^2 +\frac{1}{\epsilon}f(c_\epsilon)\Big) \mathbb{I} - \epsilon \nabla c_\epsilon \otimes \nabla c_\epsilon \right]:\nabla \Psi \, dx = \int_\Omega {\rm div}(\mu_\epsilon \Psi) c_\epsilon \, dx 
\end{equation}
for all $\Psi \in C^1(\overline{\Omega};\R^N)$ with $\Psi \cdot n_{\partial \Omega}  = 0$ on $\partial \Omega,$ where $n_{\partial \Omega}$ is the inward normal.

Fix $\Psi \in C^1(\overline{\Omega};\R^N)$ with $\Psi \cdot n_{\partial \Omega} = 0$ on $\partial \Omega$. Note that by elliptic regularity, $c_\epsilon$ belongs to $H^2(\Omega)$ for almost every $t$ in $(0,T)$, and so testing (\ref{def:CHR-ELeqn}) with $\Psi \cdot  \nabla c_\epsilon$, we compute
\begin{align*}
&-\int_\Omega {\rm div}(\mu_\epsilon \Psi) c_\epsilon \, dx -\int_{\partial \Omega} \mu_\epsilon c_\epsilon \Psi \cdot n_{\partial \Omega} \, d\mathcal{H}^{N-1} \\
& = \int_\Omega \mu_\epsilon (\Psi \cdot \nabla c_\epsilon)\, dx \\
&=\int_\Omega \epsilon \nabla\left(\Psi \cdot \nabla c_\epsilon\right)\cdot  \nabla c_\epsilon +\frac{1}{\epsilon}\Psi \cdot  \nabla c_\epsilon f'(c_\epsilon) \, dx \\
& = \int_\Omega \epsilon \nabla\left(\Psi \cdot \nabla c_\epsilon\right)\cdot  \nabla c_\epsilon +\frac{1}{\epsilon}\Psi \cdot  \nabla f(c_\epsilon) \, dx \\
& = \int_\Omega \epsilon \nabla \Psi : \nabla c_\epsilon \otimes \nabla c_\epsilon  + \epsilon\sum_{i,j}(\Psi_j \partial_i\partial_j c_\epsilon)\partial_i c_\epsilon -\frac{1}{\epsilon}\nabla \Psi :   f(c_\epsilon)\mathbb{I} \, dx - \int_{\partial \Omega} \Psi \cdot n_{\partial \Omega} \frac{1}{\epsilon} f(c_\epsilon) \, d \mathcal{H}^{N-1} \\
& = \int_\Omega \nabla \Psi :\left[\epsilon \nabla c_\epsilon \otimes \nabla c_\epsilon   -   \left(\frac{1}{\epsilon}f(c_\epsilon) + \frac{\epsilon}{2} \|\nabla c_\epsilon\|^2\right)\mathbb{I}\right] \, dx  \\
& \quad \quad - \int_{\partial \Omega} \Psi \cdot n_{\partial \Omega} \left(\frac{1}{\epsilon}f(c_\epsilon)+\frac{\epsilon}{2}\|\nabla c_\epsilon\|^2\right) \, d \mathcal{H}^{N-1}.
\end{align*}
As the boundary terms vanish, we have (\ref{eqn:approxGT}). 

We will integrate (\ref{eqn:approxGT}) in time and pass to the limit. First note that the measures generated by the energy also converge locally, that is, for $\phi$ as in (\ref{def:phiInterfaceEnergy}), we have
\begin{equation}\label{eqn:approxSurfaceWeakConv}
E_\epsilon : = \left(\frac{1}{\epsilon} f(c) + \frac{\epsilon}{2}\|\nabla c\|^2\right)\mathcal{L}^N\llcorner \Omega \wksto \sigma |D c|\llcorner \Omega\quad \text{and} \quad |D [ \phi \circ c_\epsilon]| \wksto \sigma |D c|\llcorner \Omega \ \ \text{ in }C_c(\overline{\Omega} \times [0,T])^*.
\end{equation}
To see this, letting $\xi \in C(\overline{\Omega}\times [0,T])$ be such that $0<\xi<1$, we use the $\Gamma-\liminf$ inequality and Fatou's lemma to compute
\begin{equation}\nonumber
\begin{aligned}
\sigma \int_0^T\int_\Omega \xi d|Dc|\, dt &= \sigma\int_0^T\int_{\R^+}\int_\Omega \chi_{\{\xi>s\}} \, d|Dc| \, ds \, dt  \\
&\leq \liminf_{\epsilon\to 0} \int_0^T\int_{\R^+}\int_\Omega \chi_{\{\xi>s\}} \, dE_\epsilon \, ds \, dt = \liminf_{\epsilon\to 0}\int_0^T\int_\Omega \xi \, dE_\epsilon \, dt .
\end{aligned}
\end{equation}
As the same inequality holds for $(1-\xi)$, we can use the energy convergence hypothesis (\ref{eqn:energyConvergence}) to conclude $\limsup_{\epsilon \to 0} \int_\Omega \xi \, dE_\epsilon \leq \sigma \int_\Omega \xi d|Dc|,$ which up to rescaling and translation of $\xi$ concludes the first relation in (\ref{eqn:approxSurfaceWeakConv}); and the second follows similarly.

The convergence above will be sufficient to pass to the limit for the left-hand side of (\ref{eqn:approxGT}), excluding the nonlinear tensor. For this remaining term, define $$n_\epsilon : = \frac{\nabla (\phi \circ c_\epsilon)}{\|\nabla (\phi \circ c_\epsilon)\|},$$ and rewrite the time-integrated tensor term in (\ref{eqn:approxGT}) as  
\begin{equation}\label{eqn:tensoredTerm}
 A_\epsilon:= \int_0^T\int_\Omega  n_\epsilon \otimes n_\epsilon:\nabla \Psi \, \epsilon  \|\nabla c_\epsilon\|^2 \, dx \, dt.
 \end{equation}
Given the equiparition following from the energy convergence assumption, $A_\epsilon$ satisfies the following equality up to a lower order term:
\begin{equation}\label{eqn:Aeps}
 A_\epsilon = \int_0^T\int_\Omega  n_\epsilon \otimes n_\epsilon:\nabla \Psi \,  dE_\epsilon \, dt + o_{\epsilon\to 0}(1) = \int_0^T\int_\Omega  n_\epsilon \otimes n_\epsilon:\nabla \Psi \,  d|D[\phi \circ c_\epsilon]| \, dt + o_{\epsilon\to 0}(1).
 \end{equation}
To pass to the limit in (\ref{eqn:tensoredTerm}), we pass $\epsilon \to 0$ on the right-hand side of the above equation.
As a technical aid, we fix one of the normal directions by introducing an auxiliary vector field $n^* \in C(\overline{\Omega}\times [0,T];\R^N)$ (to be chosen later) and, with $n:= Dc/|Dc|$, compute 
\begin{align}
\notag&\left| \int_0^T\int_\Omega  n_\epsilon \otimes n_\epsilon:\nabla \Psi \,  d|D[\phi \circ c_\epsilon]| dt - \sigma \int_0^T\int_\Omega  n \otimes n:\nabla \Psi \,  d|Dc|dt \right|  \\
\notag&\leq \left| \int_0^T\int_\Omega  n^* \otimes n_\epsilon:\nabla \Psi \,  d|D[\phi \circ c_\epsilon]| dt - \sigma\int_0^T\int_\Omega  n^* \otimes n:\nabla \Psi \,  d|Dc|dt \right| \\
\notag& \quad + \left| \int_0^T\int_\Omega (n_\epsilon - n^*) \otimes n_\epsilon:\nabla \Psi \,  d|D[\phi \circ c_\epsilon]| dt\right| 
+ \left| \sigma\int_0^T\int_\Omega (n - n^*) \otimes n :\nabla \Psi \,  d|Dc| dt\right| \\
\notag&\leq \left| \int_0^T\int_\Omega  \la n^*, \nabla \Psi \nabla(\phi \circ c_\epsilon)\ra\,  dx \,  dt - \sigma \int_0^T\int_\Omega \la n^* , \nabla \Psi  d[Dc]\ra \, dt \right| \\
& \quad + \left| \int_0^T\int_\Omega (n_\epsilon - n^*) \otimes n_\epsilon:\nabla \Psi \,  d|D[\phi \circ c_\epsilon]| dt\right| \label{eqn:normalCalc} \quad + \left| \sigma\int_0^T\int_\Omega (n - n^*) \otimes n :\nabla \Psi \,  d|Dc| dt\right|.
\end{align}
The first term on the right vanishes as $\epsilon \to 0$ by weak star convergence of $D[\phi \circ c_\epsilon]$ to $\sigma Dc$ on $\overline{\Omega}\times [0,T]$. The remaining two terms are controlled by
 $$C \left(\int_0^T\int_\Omega |n_\epsilon - n^*|^2 d|D[\phi \circ c_\epsilon]| \, dt \right)^{1/2} + C \left(\int_0^T\int_\Omega |n - n^*|^2 d|Dc| \, dt \right)^{1/2} ,$$
where $C > 0$ depends on $\Psi$ and the $L^\infty$ bounds on the energy following from (\ref{def:CHR-dissipation}).
By \cite[Lemma 4.8]{kroemerLaux2021}, the first of these terms converges to the second (times $\sigma$) as $\epsilon\to 0.$ Applying density, we can choose $n^* \in C(\overline{\Omega}\times [0,T];\R^N)$ such that
$$ \left(\int_0^T\int_\Omega |n - n^*|^2 d|Dc| \, dt \right)^{1/2} <\delta,$$ where $\delta>0$ is arbitrary. Consequently, the right-hand side of (\ref{eqn:normalCalc}) tends to zero as $\epsilon\to 0,$ and recalling (\ref{eqn:Aeps}), we find that
$$A_\epsilon \to  \sigma \int_0^T\int_\Omega  n \otimes n:\nabla \Psi \,  d|Dc|dt.$$
Recalling definition (\ref{eqn:tensoredTerm}), the convergence in (\ref{eqn:approxSurfaceWeakConv}), and the compactness from Theorem \ref{thm:compactnessCHReps}, we integrate (\ref{eqn:approxGT}) in time, pass to the limit, then localize in time to recover (\ref{def:GTrelationBV}), thereby concluding the theorem.
\end{proof}

\bibliographystyle{amsplain}
\bibliography{CHR_sharp_interface}

\end{document}